\documentclass[12pt,a4paper]{article}
\usepackage{amssymb,amsmath,amsthm}
\usepackage{graphicx}
\usepackage{tikz}
\usetikzlibrary{patterns}
\usetikzlibrary{arrows}
\usetikzlibrary{decorations.markings}
\usetikzlibrary{arrows}
\usepackage{hyperref}
\usepackage{xcolor}

\newcommand\ex{\ensuremath{\mathrm{ex}}}
\newcommand\exo{\ensuremath{\mathrm{ex_o}}}
\newcommand\exd{\ensuremath{\mathrm{ex_d}}}

\theoremstyle{plain}
\newtheorem{theorem}{Theorem}[section]

\newtheorem{proposition}[theorem]{Proposition}

\theoremstyle{definition}

\newtheorem{definition}[theorem]{Definition}

\newtheorem{claim}[theorem]{Claim}

\newtheorem*{lemma*}{Lemma}
\newtheorem*{thm*}{Theorem}

\newcommand\cref[1]{Corollary~\ref{cor:#1}}

\title{On the supersaturation of oriented Tur\'an problems}

\author{Xuanrui Hu$^{1}$, Yuefang Sun$^{2}$
\\
$^{1}$ School of Mathematics and Statistics, Ningbo University,\\
Ningbo 315211, China, huxuanrui1108@163.com\\
$^{2}$ Corresponding author. School of Mathematics and Statistics,\\
Ningbo University,
Ningbo 315211, China, sunyuefang@nbu.edu.cn}
\date{}

\begin{document}

\maketitle

\begin{abstract}
The oriented Tur\'{a}n number of a given oriented graph $\overrightarrow{F}$, denoted by $\exo(n,\overrightarrow{F})$, is the largest number of arcs in $n$-vertex $\overrightarrow{F}$-free oriented graphs. This parameter could be seen as a natural oriented version of the classical Tur\'{a}n number.
In this paper, we study the supersaturation phenomenon for oriented Tur\'{a}n problems, and prove oriented versions of the famous Erd\H{o}s-Simonovits Supersaturation Theorem and Moon-Moser inequality, and supersaturation theorems for tournaments and antidirected complete bipartite graphs.
\end{abstract}
\vspace{0.3cm}

{\bf Keywords}: Oriented graph; oriented Tur\'{a}n number; supersaturation; density; tournament; antidirected complete bipartite graphs. 

\vspace{0.3cm}
{\bf AMS subject classification (2020)}: 05C20; 05C35; 05C42.

\section{Introduction}


In this paper, we only consider digraphs without loops or parallel arcs. For a set $S$, we use $|S|$ to denote its size. We use $[m]$ to denote the set of positive integers from $1$ to $m$ and use $T(n,k)$ to denote the {\em Tur\'{a}n graph} which is the complete $k$-partite graph on $n$ vertices whose partition sets differ in size by at most one. 

In a digraph $G$, the {\em total degree}, denoted by $d(v)$, of a vertex $v$ is the sum of the out-degree and in-degree of $v$ in $G$.
A digraph is {\em acyclic} if it contains no directed cycles. An {\em oriented graph} is a digraph without directed cycles of length two. An oriented graph is \textit{antidirected} if it contains only sources and sinks, that is, it is bipartite such that all arcs have the same direction. 
We use $\overrightarrow{H}=\overrightarrow{H}(A\cup B,A\rightarrow B)$ to denote an antidirected oriented graph $\overrightarrow{H}$ with bipartition $A\cup B$ such that all arcs of $\overrightarrow{H}$ are from $A$ to $B$. In particular, when $|A|=s, |B|=t$ and $H$ is complete, $\overrightarrow{H}(A\cup B,A\rightarrow B)$ is abbreviated as $\overrightarrow{K}_{s,t}$. 
We will use the shorthand name {\em $n$-tournament} for a tournament on $n$ vertices. An $n$-tournament $T$ is {\em transitive} if for any three distinct vertices $u, v, w$ in $T$, if there is an arc from $u$ to $v$ and from $v$ to $w$, then there must also be an arc from $u$ to $w$. Observe that the transitive tournament with order $n$ has an ordering $u_1, u_2, \ldots, u_n$ of its vertices such that $\overrightarrow{u_iu_j}$ is an arc whenever $1\leq i< j\leq n$. Hence, it is a unique acyclic tournament with order $n$, and we denote it by $\overrightarrow{TT}_n$.  


A {\em homomorphism} from a digraph $H$ to another digraph $D$ is a mapping $f:V(H)\rightarrow V(D)$ such that $\overrightarrow{f(u)f(v)}\in E(D)$ whenever $\overrightarrow{uv}\in E(H)$. A digraph $D$ is {\em $H$-free} if there is no copy of $H$ in $D$. 
The \textit{compressibility} $z(\overrightarrow{F})$ of an oriented graph $\overrightarrow{F}$ is the smallest $k$ such that there is a homomorphism from $\overrightarrow{F}$ to any tournament of order $k$. 

\begin{definition}[Oriented Tur\'{a}n number]\label{def3}
The {\em oriented Tur\'{a}n number} of a given oriented graph $\overrightarrow{F}$ is the largest number of arcs in $n$-vertex $\overrightarrow{F}$-free oriented graphs. This quantity is denoted by $\exo(n,\overrightarrow{F})$.
\end{definition}

The concept of oriented Tur\'{a}n number $\exo(n,\overrightarrow{F})$ could be seen as an oriented version of the classical Tur\'{a}n number (of $F$), denoted by $\ex(n,F)$, which is the largest number of edges in $n$-vertex graphs that do not contain $F$ as a subgraph.
There is another related quantity $\exd(n,H)$, which is the largest number of arcs in $n$-vertex \emph{digraphs} that do not contain a \emph{digraph} $H$ as a subdigraph. Clearly, $$\exo(n,\overrightarrow{F})\le \exd(n,\overrightarrow{F})\le 2\exo(n,\overrightarrow{F})$$ for an oriented graph $\overrightarrow{F}$. Let us remark that $\exd(n,H)$ has been more widely studied, first by Brown and Harary \cite{bh}.

It is natural to extend the concepts of Tur\'{a}n density and edge density of graphs to oriented graphs as follows. Note that the oriented Tur\'{a}n density is well-defined by Proposition~\ref{pi}.

\begin{definition}[Oriented Tur\'{a}n density]\label{def1}
The {\em oriented Tur\'{a}n density} of a given oriented graph $\overrightarrow{F}$ is $$\pi_o(\overrightarrow{F})=\lim_{n\to \infty} \frac{\exo(n,\overrightarrow{F})}{\binom{n}{2}}.$$
\end{definition}

\begin{definition}[Oriented edge density]\label{def2}
The {\em oriented edge density} of a given $n$-vertex oriented graph $\overrightarrow{F}$ is $$\tau_o(\overrightarrow{F})=\frac{|E(\overrightarrow{F})|}{\binom{n}{2}}.$$
\end{definition}

The celebrated Erdős-Stone-Simonovits Theorem~\cite{Erdos-Stone, Erdos-Simonovits} states that for any graph $F$, we have $\ex(n,F)=\left(\frac{\chi(F)-2}{\chi(F)-1}+o(1)\right)\binom{n}{2}$, which determines the asymptotics of $\ex(n,F)$ when $\chi(F)\geq 3$.

Valadkhan~\cite{vala} initially studied $\exo(n,\overrightarrow{F})$ and proved an analogue of the Erdős-Stone-Simonovits Theorem~\cite{Erdos-Stone, Erdos-Simonovits} in terms of the compressibility $z(\overrightarrow{F})$ which plays a similar role as the chromatic number $\chi(F)$ in Erdős-Stone-Simonovits Theorem. 

\begin{theorem}[Valadkhan~\cite{vala}]\label{OESS} For any acyclic oriented graph $\overrightarrow{F}$, we have 
\[\exo(n,\overrightarrow{F})=\left(\frac{z(\overrightarrow{F})-2}{z(\overrightarrow{F})-1}+o(1)\right)\binom{n}{2}.\]
\end{theorem}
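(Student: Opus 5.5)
We prove matching lower and upper bounds. Throughout, write $z=z(\overrightarrow{F})$. Since $\overrightarrow{F}$ is acyclic, it has a topological ordering, so it maps homomorphically (indeed injectively) into $\overrightarrow{TT}_{|V(F)|}$. Every tournament of order $2^{m}$ contains $\overrightarrow{TT}_{m+1}$, hence also $\overrightarrow{TT}_{|V(F)|}$. So $z$ is finite, and $z\ge 2$ whenever $\overrightarrow{F}$ has an arc.

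\textbf{Lower bound.} By minimality of $z$ there is a tournament $T$ on $z-1$ vertices that admits no homomorphism from $\overrightarrow{F}$. Take the Tur\'an graph $T(n,z-1)$ with parts $V_1,\dots,V_{z-1}$, identified with the vertices $t_1,\dots,t_{z-1}$ of $T$. Orient every edge between $V_i$ and $V_j$ from $V_i$ to $V_j$ exactly when $\overrightarrow{t_it_j}\in E(T)$. The resulting oriented graph $D$ is a blow-up of $T$, so the map sending each vertex to its part is a homomorphism $D\to T$. Any copy of $\overrightarrow{F}$ in $D$ would therefore compose to a homomorphism $\overrightarrow{F}\to T$, which is impossible. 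So $D$ is $\overrightarrow{F}$-free, and $|E(D)|=e(T(n,z-1))=\left(\frac{z-2}{z-1}+o(1)\right)\binom n2$.

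\textbf{Upper bound.} Fix $\varepsilon>0$ and let $D$ be an $n$-vertex oriented graph with at least $\left(\frac{z-2}{z-1}+\varepsilon\right)\binom n2$ arcs. The underlying graph has the same number of edges. By the Erd\H{o}s--Stone theorem it contains a copy of $K_{z}(m)$, the complete $z$-partite graph with parts of size $m$, where $m$ can be taken as large as we like once $n$ is large. The remaining task is to find inside this blow-up a "blown-up tournament" on $z$ parts: sets $W_1,\dots,W_z$ with $|W_i|=|V(F)|$ such that, for each pair $i<j$, all arcs between $W_i$ and $W_j$ go in the same direction.

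We obtain these sets by iterated Ramsey/K\H{o}v\'ari--S\'os--Tur\'an type cleaning over the $\binom z2$ pairs in turn. For a pair of parts $U_i,U_j$ of size $m$, colour each edge by its direction. One colour class has at least $m^2/2$ edges. The bipartite K\H{o}v\'ari--S\'os--Tur\'an bound then gives subsets $U_i'\subseteq U_i$ and $U_j'\subseteq U_j$ of size roughly $\log m$ with all arcs oriented the same way between them. Restrict all other parts accordingly (they stay untouched) and move on to the next pair. After $\binom z2$ steps every part still has size at least an iterated logarithm of $m$, which is at least $|V(F)|$ for $m$ large. A simpler alternative is to apply the multicolour bipartite Ramsey theorem at each step.

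The final sets $W_1,\dots,W_z$ determine a tournament $T'$ on $z$ vertices: put $\overrightarrow{w_iw_j}\in E(T')$ exactly when the arcs go from $W_i$ to $W_j$. By definition of $z$ there is a homomorphism $f:\overrightarrow{F}\to T'$. Lift $f$ by sending distinct vertices of $f^{-1}(w_i)$ to distinct vertices of $W_i$; this is possible because $|W_i|\ge |V(F)|$. Since $T'$ has no loops, every arc of $\overrightarrow{F}$ joins vertices with different images, so it maps to an arc between two different parts with the correct direction. This yields a copy of $\overrightarrow{F}$ in $D$. Hence $\exo(n,\overrightarrow{F})\le\left(\frac{z-2}{z-1}+\varepsilon\right)\binom n2$ for all large $n$.

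The main obstacle is the cleaning step in the upper bound. The orientations between the $z$ parts must be made uniform simultaneously for all pairs while the parts remain large. The repeated shrinking costs iterated logarithms, but since $z$ and $|V(F)|$ are fixed and Erd\H{o}s--Stone lets $m$ grow with $n$, this loss is harmless.
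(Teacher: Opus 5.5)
The paper does not prove this theorem. It quotes it from Valadkhan's thesis as background, so there is no in-paper argument to compare yours with.

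Judged on its own, your proof is correct and complete in outline.
\begin{itemize}
\item \emph{Lower bound.} This is the balanced blow-up of a $(z-1)$-vertex tournament that admits no homomorphism from $\overrightarrow{F}$. Such a tournament exists by minimality of $z$. The blow-up is $\overrightarrow{F}$-free because the part map is a homomorphism.
\item \emph{Upper bound.} You reduce to the undirected Erd\H{o}s--Stone theorem, then clean $K_z(m)$ pair by pair with a bipartite Ramsey or K\H{o}v\'ari--S\'os--Tur\'an bound. This turns it into a blow-up of some $z$-vertex tournament $T'$. By definition of $z$, $\overrightarrow{F}$ maps into $T'$. The lift to an actual copy works because tournaments are loopless and each $|W_i|\ge |V(\overrightarrow{F})|$.
\end{itemize}
The route is elementary: it needs no regularity machinery, only Erd\H{o}s--Stone and a bipartite Ramsey estimate. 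The iterated-logarithm loss in part sizes is irrelevant for an $o(1)$ statement.

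Two small points should be tidied.
\begin{itemize}
\item In the cleaning step, ``they stay untouched'' leaves the parts with unequal sizes after the first round, yet you then invoke the balanced bound for $K_{m,m}$. Say explicitly that before each round you pass to equal-size subsets of all parts. This is legitimate because any subsets of the parts still span a complete multipartite underlying graph.
\item Also state that uniformity of direction obtained in earlier rounds is inherited by subsets. That is exactly what makes the $\binom{z}{2}$ rounds compatible.
\end{itemize}
Finally, the formula implicitly requires $\overrightarrow{F}$ to have at least one arc, since otherwise $z=1$ and the expression is undefined. Your observation that $z\ge 2$ in that case is what both halves need: a tournament on $z-1\ge 1$ vertices for the construction, and Erd\H{o}s--Stone with $r=z-1\ge 1$ (reducing to K\H{o}v\'ari--S\'os--Tur\'an when $z=2$).
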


The above theorem determines the asymptotics of $\exo(n,\overrightarrow{F})$ when $z(\overrightarrow{F})\geq 3$. If $\overrightarrow{F}$ contains a directed cycle, then the transitive tournament is $\overrightarrow{F}$-free, thus $\exo(n, \overrightarrow{F})=\binom{n}{2}$. It was shown that $2^{\frac{r-1}{2}}\leq z(\overrightarrow{TT}_r)\leq 2^{r-1}$ (see e.g. Chapter 3.2 of~\cite{vala}).
If $z(\overrightarrow{F})=2$, then the above theorem implies only $\exo(n,\overrightarrow{F})=o(n^2)$. Note that $z(\overrightarrow{F})=2$ if and only if $\overrightarrow{F}$ is antidirected, and this situation corresponds to the degenerate case of classical Turán number $\ex(n,F)$ when $\chi(F)=2$. Therefore, it is natural to try to obtain nice bounds for $\exo(n,\overrightarrow{F})$ when $\overrightarrow{F}$ is antidirected, or exact values for $\exo(n,\overrightarrow{F})$ of some oriented graphs $\overrightarrow{F}$. 

There are some existing results for upper bounds of $\exd(n,\overrightarrow{F})$ when $\overrightarrow{F}$ is antidirected, and these bounds clearly hold for $\exo(n,\overrightarrow{F})$ by the fact that $\exo(n,\overrightarrow{F})\le \exd(n,\overrightarrow{F})$. Graham \cite{gra} showed that $\exd(n,\overrightarrow{F})=O(n)$ for each antidirected tree $\overrightarrow{F}$ with $k$ arcs where $k\geq 2$. The constant factor was improved later, to $\exd(n,\overrightarrow{F})\le 4kn$ by Burr \cite{burr}. Addario-Berry, Havet, Linhares Sales, Reed and Thomass\'e \cite{ahlrt} conjectured that $\exd(n,\overrightarrow{F})\le (k-1)n$ and proved this for trees of diameter at most 3. Klimo\u{s}ov\'{a} and Stein \cite{ks} showed that $\exo(n,\overrightarrow{F})\leq (3k-4)n/2$ for the antidirected path $\overrightarrow{F}$ with $k$ arcs, where $k\geq 3$. Recently, Grzesik and Skrzypczyk \cite{gs} showed that $\exo(n,\overrightarrow{F})\le (k-1+\sqrt{k-3})n$ for the antidirected path $\overrightarrow{F}$ with $k$ arcs, where $k\geq 4$.

The following result by F\"{u}redi~\cite{Furedi}, and Alon, Krivelevich and Sudakov~\cite{Alon-Krivelevich-Sudakov} generalizes the famous K{\H{o}}v{\'a}ri-S\'os-Tur\'an Theorem \cite{kst}:

\begin{theorem}[F\"{u}redi-Alon-Krivelevich-Sudakov~\cite{Furedi, Alon-Krivelevich-Sudakov}]\label{FAKS}
Let $H$ be a bipartite graph with bipartition $A\cup B$ where each vertex in $A$ has degree at most $r$ in $B$. Then there exists a constant $c=c(H)$ such that $$\ex(n,H)\le cn^{2-\frac{1}{r}}.$$    
\end{theorem}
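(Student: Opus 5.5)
We would prove Theorem~\ref{FAKS} by \emph{dependent random choice}. The idea is to find a set $U$ of vertices in which every small subset has many common neighbours. We then embed $B$ into $U$ and embed $A$ greedily.

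Let $|A|=a$, $|B|=b$, and put $h=a+b$. Let $G$ be an $n$-vertex graph with $e(G)\ge cn^{2-1/r}$, so its average degree is $d=2e(G)/n\ge 2cn^{1-1/r}$. The constant $c=c(H)$ will be fixed at the end.

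\textbf{Step 1 (random choice).} Choose $r$ vertices $v_1,\dots,v_r$ of $G$ uniformly at random, with repetition, and let $U$ be their common neighbourhood. A vertex $u$ lies in $U$ exactly when all $v_i$ lie in $N(u)$. Hence
\[
\mathbb{E}|U|=\sum_{u}\Bigl(\frac{d(u)}{n}\Bigr)^r\ \ge\ n\Bigl(\frac{d}{n}\Bigr)^r\ \ge\ (2c)^r ,
\]
where the middle inequality is convexity (Jensen).

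\textbf{Step 2 (counting bad sets).} Call an $r$-subset $S\subseteq V(G)$ \emph{bad} if its common neighbourhood has fewer than $h$ vertices. A bad $S$ lies in $U$ only if every $v_i$ falls into the common neighbourhood of $S$, which happens with probability less than $(h/n)^r$. There are at most $\binom{n}{r}\le n^r$ bad sets, so the expected number of bad $r$-subsets of $U$ is at most $h^r$.

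Let $Y$ be the number of bad $r$-subsets of $U$. Then $\mathbb{E}\bigl(|U|-Y\bigr)\ge (2c)^r-h^r$. Choose $c$ so that $(2c)^r-h^r\ge \max(b,r)$. Then some choice of the $v_i$ gives $|U|-Y\ge \max(b,r)$. Deleting one vertex from each bad subset leaves a set $U'$ with $|U'|\ge \max(b,r)$ in which every $r$-subset has at least $h$ common neighbours in $G$.

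\textbf{Step 3 (embedding).} Map $B$ injectively into $U'$; this is possible since $|U'|\ge b$. Then process the vertices $x\in A$ one at a time. The images of the neighbours of $x$ form a set of at most $r$ vertices of $U'$. If it has fewer than $r$ vertices, enlarge it arbitrarily to an $r$-subset of $U'$, using $|U'|\ge r$. This $r$-subset has at least $h=a+b$ common neighbours, and fewer than $a+b$ vertices have been used so far. So we can map $x$ to an unused common neighbour. This produces a copy of $H$ in $G$, which gives $\ex(n,H)\le cn^{2-1/r}$.

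The main obstacle is the bookkeeping in Step 2. The deletion argument requires the expected gain $(2c)^r$ to beat the loss $h^r$, and this is exactly where the exponent $2-1/r$ is forced: it makes $n(d/n)^r$ a constant independent of $n$. The remaining issues are minor. Vertices of $A$ with degree below $r$ are handled by padding, and the embedding is automatically injective because $U'$ and the chosen common neighbours are kept distinct as above.
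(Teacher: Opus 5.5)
The paper gives no proof of Theorem~\ref{FAKS}. It is quoted from F\"uredi and from Alon--Krivelevich--Sudakov, and serves only as background for the oriented version, Theorem~\ref{orientedFAKS}. Your argument is correct and complete. It is essentially the dependent random choice proof of Alon, Krivelevich and Sudakov, with the sides arranged as the statement requires: the unrestricted side $B$ goes into $U'$, and the side $A$ of degree at most $r$ is embedded greedily. The key estimates are right:
$\mathbb{E}|U|\ge d^r/n^{r-1}\ge (2c)^r$;
a bad $r$-set lies in $U$ with probability at most $((h-1)/n)^r$;
enlarging the image of $N_H(x)$ to an $r$-subset of $U'$ only shrinks the common neighbourhood, so the padding is harmless.
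The method also gives an explicit constant, $c=\tfrac12\bigl(h^r+\max(b,r)\bigr)^{1/r}=O(|V(H)|)$. A further point in the context of this paper is that the same argument proves Theorem~\ref{orientedFAKS} almost verbatim. Take $U=\bigcap_i N^+(v_i)$, so that $\mathbb{E}|U|=\sum_u (d^-(u)/n)^r\ge n\bigl(|E(\overrightarrow{G})|/n^2\bigr)^r\ge c^r$. Call an $r$-set bad if it has fewer than $h$ common in-neighbours. Then embed $B$ into $U'$, and each $x\in A$ as an unused common in-neighbour of the images of its out-neighbours.
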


Gerbner, Hu and Sun~\cite{Gerbner-Hu-Sun} studied the oriented Tur\'{a}n number of antidirected oriented graph and obtained the following oriented strengthening of the F\"{u}redi-Alon-Krivelevich-Sudakov Theorem.

\begin{theorem}[Gerbner-Hu-Sun~\cite{Gerbner-Hu-Sun}]\label{orientedFAKS}
Let $\overrightarrow{H}=\overrightarrow{H}(A\cup B,A\rightarrow B)$ be a bipartite oriented graph where each vertex in $A$ has out-degree at most $r$ in $B$. Then there exists a constant $c=c(\overrightarrow{H})$ such that $$\exo(n,\overrightarrow{H})\le cn^{2-\frac{1}{r}}.$$
\end{theorem}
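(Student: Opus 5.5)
The plan is to reduce to the undirected Füredi--Alon--Krivelevich--Sudakov theorem (Theorem~\ref{FAKS}) by finding, inside any dense oriented graph, a large undirected bipartite subgraph whose edges all go in the same direction. The undirected copy of $H$ found there is then automatically a copy of $\overrightarrow{H}$ with the correct orientation.

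Let $\overrightarrow{G}$ be an $n$-vertex oriented graph with more than $cn^{2-1/r}$ arcs, where $c$ will be fixed later. Take a uniformly random partition $V(\overrightarrow{G})=X\cup Y$, putting each vertex independently into $X$ with probability $1/2$. Each arc $\overrightarrow{uv}$ satisfies $u\in X$ and $v\in Y$ with probability $1/4$. Hence some partition has at least $|E(\overrightarrow{G})|/4$ arcs going from $X$ to $Y$.

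Let $G'$ be the undirected bipartite graph on $X\cup Y$ formed by these arcs. It has more than $(c/4)n^{2-1/r}$ edges. Choose $c=4c_0+1$, where $c_0=c(H)$ is the constant from Theorem~\ref{FAKS} for the underlying graph $H$ with bipartition $A\cup B$. Then $G'$ contains a copy of $H$.

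The main obstacle is that this copy must place $A$ inside $X$ and $B$ inside $Y$. Otherwise the arcs point from $B$ to $A$, and we get the reverse orientation. Theorem~\ref{FAKS} gives no control over which side is embedded where. I would handle this with a symmetrization trick. Let $H^*$ be the disjoint union of $H$ with a mirrored copy of $H$, in which the roles of $A$ and $B$ are swapped. The side-$A$ vertices of $H^*$, namely $A$ from the first copy and $B$ from the second, have degree at most $\max(r,\Delta_B)$. That bound could exceed $r$, so this is not yet satisfactory.

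The cleaner fix is to use the bipartiteness of $G'$ directly. If $H$ is connected, any embedding of $H$ into the bipartite graph $G'$ sends $A$ entirely to one side. So I would instead apply Theorem~\ref{FAKS} to the graph $H^+$, obtained from $H$ by adding a new vertex $a^*$ to the $A$ side joined to all of $B$... but that raises degrees too.

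The alternative I would actually pursue is to apply the degree-$r$ version of the Füredi--Alon--Krivelevich--Sudakov argument itself, namely dependent random choice, directly to $G'$. Dependent random choice selects $r$ random vertices in $Y$ and takes their common neighbourhood in $X$. This yields a set $U\subseteq X$ in which every $r$-subset has many common neighbours in $Y$. The greedy embedding then maps $A$ into $U\subseteq X$ and $B$ into $Y$. Since it only uses that each $A$-vertex has at most $r$ neighbours, it respects the orientation. The result is a copy of $\overrightarrow{H}$ with all arcs from $A$ to $B$, once $c$ is chosen large in terms of $|A|$, $|B|$ and $r$.

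Running this one-sided version of the embedding lemma, rather than citing Theorem~\ref{FAKS} as a black box, is the step I expect to require care.
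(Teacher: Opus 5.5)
The paper does not prove this statement; it is quoted from Gerbner--Hu--Sun, so your argument has to stand on its own. Your overall plan is viable, and the first step is fine: keep only the arcs from $X$ to $Y$ for a good partition, then run a one-sided dependent random choice instead of using Theorem~\ref{FAKS} as a black box. But the step you flagged as delicate has the two sides backwards and, as written, fails.

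A set $U\subseteq X$ in which every $r$-subset has many common neighbours in $Y$ only lets you embed greedily into $Y$ vertices that have at most $r$ neighbours among those already placed in $U$. In your embedding $A$ goes into $U$ and $B$ is embedded greedily. So each $b\in B$ must go to a common out-neighbour of the images of \emph{all} of its in-neighbours in $A$. Nothing bounds that number by $r$, since a vertex of $B$ may be joined to all of $A$. Hence the claim ``it only uses that each $A$-vertex has at most $r$ neighbours'' is false: your embedding needs a degree bound on the $B$ side, which is not assumed.

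The repair is to exchange the roles of the two sides while keeping the direction of the arcs.
\begin{itemize}
\item Pick $r$ vertices of $X$ uniformly at random (with repetition), and let $U\subseteq Y$ be their common out-neighbourhood in $G'$.
\item Call an $r$-set $S\subseteq Y$ bad if it has fewer than $m=|A|+|B|$ common in-neighbours in $X$.
\item Then $\mathbb{E}|U|\ge |Y|\bigl(e(G')/(|X||Y|)\bigr)^r$, while the expected number of bad $r$-sets inside $U$ is at most $\binom{|Y|}{r}(m/|X|)^r$.
\item Using $|X|^r|Y|^{r-1}\le n^{2r-1}$ and $e(G')\ge (c/4)n^{2-1/r}$, the difference is at least $(c/4)^r-m^r/r!$. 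So no balance between $|X|$ and $|Y|$ is required.
\item Delete one vertex from each bad set; at least $\max(|B|,r)$ vertices remain once $c$ is large. Embed $B$ among them.
\item Place each $a\in A$ at an unused common in-neighbour in $X$ of the images of its at most $r$ out-neighbours.
\end{itemize}
Every arc of the copy then goes from $X$ to $Y$, i.e.\ from $A$ to $B$. With this change your proof is complete.

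The random bipartition is also dispensable. The same argument run directly in $\overrightarrow{G}$ works verbatim: take the common out-neighbourhood of $r$ random vertices and the common in-neighbourhoods of $r$-sets. This gives $\mathbb{E}|U|\ge |E(\overrightarrow{G})|^r/n^{2r-1}$.
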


In the same paper, Gerbner, Hu and Sun~\cite{Gerbner-Hu-Sun} also proved several propositions that give exact results for several oriented graphs. In particular, they determined all exact values of $\exo(n,\overrightarrow{F})$ for every oriented graph $\overrightarrow{F}$ with at most three arcs and sufficiently large $n$. In addition, they proved a stability result and used it to determine the Turán number of an orientation of $C_4$.

\subsection{Our results}
In this paper, we study the supersaturation phenomenon for oriented Tur\'{a}n problems. The following result concerns the following supersaturation phenomenon: if an oriented graph $\overrightarrow{G}$ has oriented edge density $\tau_o(\overrightarrow{G})$ above $\pi_o(\overrightarrow{F})$, then it has ``many" copies of $\overrightarrow{F}$, that is, there is a positive density number of $\overrightarrow{F}$ in $\overrightarrow{G}$. This result could be seen as an oriented version of the famous Erd\H{o}s-Simonovits Supersaturation Theorem~\cite{Erdos-Simonovits1984}.

\begin{theorem}\label{superori}
Let $\varepsilon >0$. For any oriented graph $\overrightarrow{F}$ with order $h$, there exist $\delta>0$ and $n_0\in \mathbb{N}$ such that any oriented graph $\overrightarrow{G}$ with order at least $n_0$ and size at least $
\left(\pi_o(\overrightarrow{F})+\varepsilon\right)\binom{n}{2}$ contains at least $\delta \binom{n}{h}$ copies of $\overrightarrow{F}$.
\end{theorem}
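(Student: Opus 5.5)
We follow the standard averaging argument of Erd\H{o}s and Simonovits, adapted to oriented graphs. First, by the definition of $\pi_o(\overrightarrow{F})$ (well-defined by Proposition~\ref{pi}), we fix an integer $k\ge h$ such that
\[\exo(k,\overrightarrow{F})\le \left(\pi_o(\overrightarrow{F})+\tfrac{\varepsilon}{2}\right)\binom{k}{2}.\]
Given $\overrightarrow{G}$ on $n\ge n_0$ vertices with at least $(\pi_o(\overrightarrow{F})+\varepsilon)\binom{n}{2}$ arcs, we look at all $\binom{n}{k}$ induced $k$-vertex subgraphs $\overrightarrow{G}[K]$. Each of these is again an oriented graph, so whenever $|E(\overrightarrow{G}[K])|>\exo(k,\overrightarrow{F})$, the set $K$ spans a copy of $\overrightarrow{F}$. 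Call such $K$ \emph{good}.

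Next we show that a positive proportion of the $k$-sets are good. Each arc of $\overrightarrow{G}$ lies in exactly $\binom{n-2}{k-2}$ of the $k$-sets, so averaging gives
\[\sum_K |E(\overrightarrow{G}[K])| = |E(\overrightarrow{G})|\binom{n-2}{k-2}\ge \left(\pi_o(\overrightarrow{F})+\varepsilon\right)\binom{k}{2}\binom{n}{k}.\]
A good set contributes at most $\binom{k}{2}$ to this sum, and a bad set contributes at most $(\pi_o(\overrightarrow{F})+\varepsilon/2)\binom{k}{2}$. Writing $g$ for the number of good sets, we get
\[\left(\pi_o+\varepsilon\right)\binom{n}{k}\le g+\left(\pi_o+\tfrac{\varepsilon}{2}\right)\binom{n}{k},\]
and hence $g\ge \frac{\varepsilon}{2}\binom{n}{k}$.

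Finally we convert good $k$-sets into copies of $\overrightarrow{F}$. Each good $K$ contains at least one copy of $\overrightarrow{F}$, and in particular an $h$-subset of its vertices spanning such a copy. A fixed $h$-set of vertices lies in exactly $\binom{n-h}{k-h}$ of the $k$-sets. Counting pairs (good $K$, $h$-subset spanning a copy of $\overrightarrow{F}$) therefore shows that the number of $h$-sets spanning a copy is at least
\[\frac{\varepsilon}{2}\binom{n}{k}\Big/\binom{n-h}{k-h}=\frac{\varepsilon}{2}\binom{n}{h}\Big/\binom{k}{h}.\]
Each such $h$-set gives at least one copy of $\overrightarrow{F}$, and distinct $h$-sets give distinct copies. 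So we may take $\delta=\frac{\varepsilon}{2}\binom{k}{h}^{-1}$, and $n_0\ge k$.

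None of the steps is genuinely hard; the one point needing care is the existence of the limit $\pi_o(\overrightarrow{F})$, which we use to pick $k$. This follows from the standard monotonicity argument: $\exo(n,\overrightarrow{F})/\binom{n}{2}$ is non-increasing in $n$, because averaging over $(n-1)$-vertex induced subgraphs of an extremal graph bounds it by the value at $n-1$. That is exactly what Proposition~\ref{pi} supplies. If $\overrightarrow{F}$ contains a directed cycle, then $\pi_o=1$ and the hypothesis is vacuous for $\varepsilon>0$, so no separate case is needed.
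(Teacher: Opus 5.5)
Your proof is correct and is essentially the paper's argument. Like the paper, you fix a large $m$ (your $k$) with $\exo(m,\overrightarrow{F})$ close to $\pi_o(\overrightarrow{F})\binom{m}{2}$, average arc counts over all $m$-sets to get a positive fraction of $m$-sets that must contain $\overrightarrow{F}$, and divide by $\binom{n-h}{m-h}$ to count copies. The differences are only cosmetic: you use margin $\varepsilon/2$ and threshold directly at $\exo(k,\overrightarrow{F})$, giving $\delta=\frac{\varepsilon}{2}\binom{k}{h}^{-1}$ instead of the paper's $\frac{\varepsilon}{4}\binom{m}{h}^{-1}$, and you state explicitly the requirements $k\ge h$ and $n_0\ge k$, which the paper leaves implicit.
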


After that, we consider the transitive tournament $\overrightarrow{TT}_r$. When $r=3$, by Proposition~\ref{smallgraph}, we have $z(\overrightarrow{TT}_3)=4$. Gerbner, Hu and Sun~\cite{Gerbner-Hu-Sun} proved that $\exo(n,\overrightarrow{F})=|E(T(n,z(\overrightarrow{F})-1))|$ for any tournament $\overrightarrow{F}$. Therefore, $\exo(n,\overrightarrow{TT}_3)=|E(T(n,3))|$. The following result shows that an oriented graph with order $n$ and size $|E(T(n,3))|+1$ has at least about ``$2n/3$" copies of $\overrightarrow{TT}_3$.  Therefore, it could be seen as an oriented version of Rademacher's Theorem (see e.g. \cite{Erdos1962}) which shows that there are at least $\lfloor \frac{n}{2} \rfloor$ copies of triangles in an $n$-vertex graph $G$ with size $|E(T(n,2))|+1$. Note that there are two types of orientations of a triangle, another one is $\overrightarrow{C}_3$. However, it makes no sense to study the supersaturation for $\overrightarrow{C}_3$, as $\exo(n,\overrightarrow{C}_3)={n\choose 2}$.
\begin{theorem}\label{TT3}
Let $\overrightarrow{G}$ be an oriented graph with order $n=3k+t>3$, where $k$ is a positive integer and $t\in\{0,1,2\}$. If $|E(\overrightarrow{G})|=|E(T(n,3))|+1$, then there are at least $f(k)$
copies of $\overrightarrow{TT}_3$ in $\overrightarrow{G}$, where $f(k)=\begin{cases}
       2k,& t\in \{0,1\}\\
       2k+1,&t=2\\
    \end{cases}$.    
\end{theorem}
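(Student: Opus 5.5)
The plan is to pass to the underlying simple graph and argue by induction on $n$, as in the classical proof of Rademacher's Theorem. Let $G$ be the underlying graph of $\overrightarrow{G}$. Every triangle of $G$ is oriented either as $\overrightarrow{TT}_3$ or as $\overrightarrow{C}_3$. Every $4$-tournament contains at most two cyclic triangles, so every $K_4$ in $G$ carries at least two copies of $\overrightarrow{TT}_3$. Since $|E(G)|=|E(T(n,3))|+1$, Tur\'an's theorem gives a $K_4$ in $G$, which already yields $2$ copies. The task is to boost this to $f(k)$.

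\textbf{Extremal example.} Add one arc $\overrightarrow{xy}$ inside a part $A$ of $T(n,3)$ oriented $A\to B\to C\to A$. Then every triangle of $T(n,3)$ is cyclic, while each triangle $xyz$ with $z\notin A$ is transitive. This gives exactly $|B|+|C|$ copies, which equals $f(k)$ when $x,y$ lie in a largest part. So the argument should mirror the count ``common neighbours of one extra edge''.

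\textbf{Induction step.} I would note that $|E(T(n,3))|-|E(T(n-3,3))|=2n-3$ and $f(k)-f(k-1)=2$ with $t$ unchanged when passing from $n$ to $n-3$. I would then look for three vertices $a,b,c$ spanning a triangle in $G$ such that two conditions hold:
\begin{itemize}
\item $d(a)+d(b)+d(c)\le 2n$, so that $\overrightarrow{G}-\{a,b,c\}$ still has at least $|E(T(n-3,3))|+1$ arcs;
\item at least two copies of $\overrightarrow{TT}_3$ meet $\{a,b,c\}$.
\end{itemize}
Deleting a few extra arcs if necessary to reach exactly $|E(T(n-3,3))|+1$ arcs, induction gives $f(k-1)$ copies disjoint from $\{a,b,c\}$. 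Together with the two copies meeting the triangle, this gives $f(k)$. The natural candidate for $\{a,b,c\}$ is a triangle inside a $K_4$ on $Q=\{a,b,c,d\}$: the $K_4$ supplies two $\overrightarrow{TT}_3$'s, and at least one of them meets any three-vertex subset. I would choose the triangle in $Q$ of smallest degree sum to get the second copy or the degree bound.

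\textbf{Dense case.} If every triangle in every $K_4$ has degree sum exceeding $2n$, then $\sum_{v\in Q}d(v)>\tfrac{8n}{3}$. Hence many outside vertices $w$ have at least three neighbours in $Q$. Each such $w$ together with three of its neighbours in $Q$ spans a $K_4$. Each such $K_4$ contains a $\overrightarrow{TT}_3$ through $w$, since at most one of its two transitive triangles avoids $w$. These copies are distinct for distinct $w$. Counting the vertices $w$ from the degree excess should give about $2n/3\approx 2k$ copies directly. I would make this sharp by also using that $G-Q$ has at most $|E(T(n-4,3))|$ edges unless it contains another $K_4$ (in which case recurse on it). This bounds the number of edges touching $Q$ from below and hence the number of $w$ with three or more neighbours in $Q$.

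\textbf{Base cases and main obstacle.} The base cases $n\in\{4,5,6\}$ (i.e.\ $k=1$ with $t=1,2$, and the smallest $t=0$ case) would be checked by hand. The main obstacle is the dense case: extracting exactly $f(k)$, including the $+1$ when $t=2$, from the degree count requires careful arithmetic with part sizes. I expect to need the stability-type fact that a $K_4$-free graph close to $|E(T(n,3))|$ edges is near-tripartite, to pin down the worst configuration.
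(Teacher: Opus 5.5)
There is a genuine gap, in the dense case. Your sparse case is sound, and easier than you make it. If a $K_4$ on $Q$ has three vertices $a,b,c$ with $d(a)+d(b)+d(c)\le 2n$, deleting them leaves at least $|E(T(n-3,3))|+1$ arcs. Moreover, \emph{both} transitive triangles of the $4$-tournament on $Q$ meet $\{a,b,c\}$ in two vertices, so the second copy is automatic and $f(k-1)+2=f(k)$ follows.

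The dense case is the one you leave open, and the count you sketch there is off by a factor of two. Taking one $\overrightarrow{TT}_3$ through each $w\notin Q$ with at least three neighbours in $Q$ guarantees only about $n/3$ copies. You know that $e(Q,V\setminus Q)=\sum_{v\in Q}d(v)-12\approx 8n/3$ and $|V\setminus Q|=n-4$. But this excess can be carried entirely by vertices adjacent to all of $Q$, with the rest having exactly two neighbours there. Then only about $n/3$ vertices qualify; for $n=3k$ this gives $k+1$ copies, not $2k$. Your proposed sharpening does not help. The bound $e(G-Q)\le |E(T(n-4,3))|$ gives $e(Q,V\setminus Q)\ge 8k-10$ when $n=3k$ (and similarly for the other residues), which is exactly what the dense-case hypothesis already gives. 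The recursion on a further $K_4$ and the stability input are unspecified, and they turn out to be unnecessary.

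The missing idea is to weight each outside vertex $w$ by $|N(w)\cap Q|-2$. Suppose $w$ has $j$ neighbours in $Q$, split into its out-neighbours $O$ and in-neighbours $I$. A triangle $wab$ with $a,b\in Q$ is cyclic only if it uses an arc from $O$ to $I$. Hence at least $\binom{j}{2}-|O||I|\ge j-2$ of these triangles are transitive: one for $j=3$, two for $j=4$. Each such triangle has exactly one vertex outside $Q$. Adding the two copies inside $Q$, you get at least $2+e(Q,V\setminus Q)-2(n-4)=S-2n-2$ copies, where $S=\sum_{v\in Q}d(v)$. In the dense case, order the degrees on $Q$ as $d_1\le d_2\le d_3\le d_4$. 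Then $d_1+d_2+d_3\ge 2n+1$ and $d_4\ge\lceil (2n+1)/3\rceil$. So the count is at least $\lceil (2n+1)/3\rceil-1$, which equals $f(k)$ exactly for $n=3k,3k+1,3k+2$. Also, for $n\le 6$ the sparse case cannot occur: it would leave $\binom{n-3}{2}+1$ arcs on $n-3$ vertices. So no base cases need separate checking.

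With this repair, your route (induction $n\to n-3$ by deleting a light triangle of a $K_4$) is genuinely different from the paper's and much shorter. The paper inducts $n\to n-1$ (or $n\to n-2$) by deleting a low-degree vertex. It gains the extra copy by deleting an arc of an already-found $\overrightarrow{TT}_3$ and re-applying induction. It also needs a long case analysis of the degree sequence, including the two claims in Cases 3.2.1 and 3.2.2.
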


For general $r$, we prove an oriented version of Moon-Moser inequality~\cite{Moon-Moser} as follows:

\begin{theorem}\label{OMM}
For any oriented graph $\overrightarrow{G}$ with order $n$, denote by $N_r$ and $M_r$ the numbers of $r$-tournaments and copies of $\overrightarrow{TT}_r$ contained in $\overrightarrow{G}$, respectively. The following inequalities hold:
\begin{description}
\item[(a)]$\frac{N_{r+1}}{N_{r}}\geq \frac{1}{r^2-1}\left(r^2\frac{N_{r}}{N_{r-1}}-n\right)$ for $r\geq 2$;
\item[(b)]$\frac{M_{r+1}}{M_{r}}\geq \frac{1}{r^2-1}\left(r^2\frac{M_{r}}{M_{r-1}}-n\right)$ for $r\geq 3$.
\end{description}
\end{theorem}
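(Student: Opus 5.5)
Part (a) is the classical Moon–Moser inequality applied to the underlying simple graph $G$ of $\overrightarrow{G}$. An $r$-tournament in $\overrightarrow{G}$ is exactly an $r$-clique of $G$, because $\overrightarrow{G}$ is oriented and so every adjacent pair carries exactly one arc. So $N_r$ is the number of $r$-cliques of $G$, and it is enough to reprove Moon–Moser in a form that also adapts to (b).

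\textbf{Proof of (a).} For a clique $L$ let $\mathrm{ext}(L)$ be the set of vertices outside $L$ joined to all of $L$, and let $d(L)=|\mathrm{ext}(L)|$. Then
\[
\sum_{|L|=r-1} d(L)=rN_r \quad\text{and}\quad \sum_{|K|=r} d(K)=(r+1)N_{r+1}.
\]
Now fix an $r$-clique $K$ and count the pairs $(L,u)$ with $L\subset K$, $|L|=r-1$, $u\in \mathrm{ext}(L)$.
\begin{itemize}
\item Each $u\in K$ lies in $\mathrm{ext}(K-u)$ only, which gives $r$ pairs.
\item Each $u\in \mathrm{ext}(K)$ gives $r$ pairs.
\item Every other $u\notin K$ misses some vertex $x\in K$, so it can only lie in $\mathrm{ext}(K-x)$, giving at most one pair.
\end{itemize}
Hence $\sum_{L\subset K} d(L)\le n+(r-1)d(K)$. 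Summing over all $K$ gives
\[
\sum_L d(L)^2\le nN_r+(r^2-1)N_{r+1}.
\]
Cauchy–Schwarz gives $\sum_L d(L)^2\ge (rN_r)^2/N_{r-1}$. Rearranging and dividing by $N_r$ yields (a).

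\textbf{Setup for (b).} I would run the same scheme, but now $L,K$ range over transitive subtournaments, and $\mathrm{ext}(L)$ is the set of $u$ such that $L+u$ induces $\overrightarrow{TT}_{|L|+1}$. The identities $\sum_L d(L)=rM_r$ and $\sum_K d(K)=(r+1)M_{r+1}$ still hold, since every sub-tournament of a transitive tournament is transitive and every vertex of $\overrightarrow{TT}_{r+1}$ can be deleted. Cauchy–Schwarz goes through verbatim. What remains is the per-$K$ bound
\[
\sum_{L\subset K} d(L)\le n+(r-1)d(K)
\]
for every copy $K$ of $\overrightarrow{TT}_r$. Vertices in $K$, vertices in $\mathrm{ext}(K)$, and vertices not adjacent to all of $K$ contribute as before.

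\textbf{Main obstacle.} The new case is a vertex $u\notin K$ that is adjacent to all of $K$, while $K+u$ is a tournament but not transitive. Since $K$ is transitive, every directed triangle of $K+u$ contains $u$. Moreover, $K-x+u$ is transitive exactly when $x$ lies in every directed triangle $\{u,a,b\}$. No triangle can contain three distinct vertices of $K$, so such a $u$ contributes at most $2$ pairs. It contributes exactly $2$ only when $K+u$ has a unique directed triangle $\{u,x,y\}$. In a transitive order $v_1,\dots,v_r$ of $K$, this forces $x,y$ to be consecutive, $v_i,v_{i+1}$, with $u$ "inserted across" them.

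I would first try a charging argument to absorb these extra contributions. Each such bad pair $(K,u)$ produces two further copies $K-v_i+u$ and $K-v_{i+1}+u$ of $\overrightarrow{TT}_r$ that are related by a swap. The plan is to show that, after summing over all $K$, the excess is paid for by the slack. The available slack is the $r-1\ge 2$ unused incidences of vertices inside $K$, together with the fact that such a $u$ lies in $\mathrm{ext}(L)$ for no other $L$. Here the hypothesis $r\ge3$ should be essential.

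If that charging fails, the fallback is to double count the triples $(L,v,w)$ with $v,w\in \mathrm{ext}(L)$ directly, and bound the number of such triples for which $L+v+w$ is not transitive in terms of $nM_r$. This would give the same inequality with the defect term absorbed into the $-n$.
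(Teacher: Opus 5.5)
Your proof of (a) is correct and is essentially the paper's double counting: the paper bounds $P=\sum_L d(L)\bigl(n-r+1-d(L)\bigr)$ from both sides, while you bound $\sum_L d(L)^2$ through the per-clique estimate $\sum_{L\subset K}d(L)\le n+(r-1)d(K)$, which is the same identity rearranged.

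For (b) there is a genuine gap. You do not give a proof, only a plan (a charging argument, or a fallback that pushes the defect into the $-n$ term). Neither can work, because the obstacle you isolated is fatal: inequality (b) is false. Take $n=4$, a directed triangle $a\to b\to c\to a$, and a sink $z$ with arcs $a\to z$, $b\to z$, $c\to z$. Every triple containing $z$ is transitive and $\{a,b,c\}$ is not, so $M_2=6$, $M_3=3$ and $M_4=0$. With $r=3$, (b) would assert $0=M_4/M_3\ge\frac18\bigl(9\cdot\frac36-4\bigr)=\frac1{16}$, which is false.

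This is exactly your bad configuration. Take $K=\{a,b,z\}$ and $u=c$. The unique directed triangle $\{a,b,c\}$ runs through the consecutive vertices $a,b$ of $K$. Then $\sum_{L\subset K}d(L)=1+2+2=5>4=n+(r-1)d(K)$, and globally $\sum_L d(L)^2=15>12=nM_3+8M_4$. Nor is this a small-$n$ accident. In a random $n$-tournament, with high probability $M_3\sim\frac34\binom n3$ and $M_4\sim\frac38\binom n4$. Hence $M_4/M_3\sim n/8$, while the right-hand side of (b) for $r=3$ is $\sim 5n/32$. The same computation also violates (b) for $r=4$, where $M_5/M_4\sim n/16$ but the right-hand side is $\sim n/15$.

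The paper's proof of (b) is only the remark that the argument of (a) transfers, and it breaks at precisely the point you found. Its lower bound $P\ge(r-1)\sum_j(n-r-b_j)$ requires, for every $u'$ not extending $Q_j$, a vertex $y\in Q_j$ such that all $r-1$ sets $L\cup\{u'\}$ with $y\in L\subset Q_j$ are non-transitive. In the unique-triangle case only $r-2$ of the sets $L\cup\{u'\}$ are non-transitive; for $Q_j=\{a,b,z\}$ and $u'=c$, no such $y$ exists. So the failure the paper notes for $r=2$ persists for every $r\ge3$.

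What your analysis does prove rigorously is a weaker, corrected inequality. Every $u\notin K\cup\mathrm{ext}(K)$ contributes at most $2$, so $\sum_{L\subset K}d(L)\le 2n-r+(r-2)d(K)$. The same Cauchy--Schwarz step then gives
$\frac{M_{r+1}}{M_r}\ge\frac{1}{(r-2)(r+1)}\bigl(r^2\frac{M_r}{M_{r-1}}-2n+r\bigr)$ for $r\ge3$. The statement (b) as written should not be claimed.
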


By Theorem~\ref{OMM}, we give the following supersaturation result of $r$-tournaments for general $r$ in terms of oriented edge density $\tau_o(\overrightarrow{F})$: 

\begin{theorem}\label{TTdensity}
Let $\overrightarrow{G}$ be an oriented graph with order $n$.
If $$|E(\overrightarrow{G})|\geq(1-\frac{1}{t})\frac{n^2}{2}$$ for some $t\in \mathbb{R}^+$, then $$N_r(\overrightarrow{G})\geq \binom{t}{r}(\frac{n}{t})^r,$$ where $$\binom{t}{r}= \left\{\begin{matrix}
\frac{t(t-1)\cdots (t-r+1)}{r!}, & t>r-1, \\
0, & t\leq r-1.
\end{matrix}\right.$$
\end{theorem}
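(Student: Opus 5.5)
This is the classical Moon--Moser / Bollob\'as style consequence, so the plan is induction on $r$ using Theorem~\ref{OMM}(a).

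\textbf{Setup.} Write $N_1=n$ and $N_2=|E(\overrightarrow{G})|$; note that the $2$-tournaments are exactly the arcs. It is convenient to set $N_0=1$. I would then check that (a) also holds at $r=1$, i.e.\ $N_2/N_1\ge \frac{1}{0}(\cdot)$, or treat the base cases directly. The target is $N_r\ge \binom{t}{r}(n/t)^r$. Its ratio form is
\[
\frac{N_{r+1}}{N_r}\ \ge\ \frac{n(t-r)}{t(r+1)},
\]
which is exactly the ratio of consecutive targets $\binom{t}{r+1}(n/t)^{r+1}\big/\binom{t}{r}(n/t)^r$. So the plan is to prove by induction on $r\ge1$ that
\[
\frac{N_{r+1}}{N_r}\ \ge\ \frac{n(t-r)}{t(r+1)},
\]
and then multiply these ratios, provided every $N_j$ is positive along the way.

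\textbf{Base case.} For $r=1$ the claim reads $N_2/N_1\ge n(t-1)/(2t)$, which is precisely the hypothesis $|E|\ge(1-1/t)n^2/2$.

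\textbf{Inductive step.} Assume $N_r/N_{r-1}\ge n(t-r+1)/(tr)$ and apply Theorem~\ref{OMM}(a):
\[
\frac{N_{r+1}}{N_r}\ \ge\ \frac{1}{r^2-1}\Big(r^2\cdot\frac{n(t-r+1)}{tr}-n\Big)
=\frac{n}{t(r^2-1)}\big(r(t-r+1)-t\big)
=\frac{n(r-1)(t-r)}{t(r^2-1)}
=\frac{n(t-r)}{t(r+1)}.
\]
The key algebraic identity here is $r(t-r+1)-t=(r-1)(t-r)$, which closes the induction neatly.

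\textbf{Main obstacle: positivity.} Theorem~\ref{OMM}(a) is a statement about ratios, so it needs $N_{r-1},N_r>0$.
\begin{itemize}
\item If $t\le r-1$, the target is $0$ and the inequality is trivial. So assume $t>r-1$.
\item Let $r_0<t+1$ be the largest index in question. For $j\le r_0-1$ we have $t-j>0$, so the ratio bound at step $j$ is strictly positive.
\item Inductively this gives $N_{j+1}>0$ from $N_j>0$, starting from $N_1=n>0$.
\end{itemize}
Hence every denominator used is positive, and the inductive step is valid up to $r_0-1$.

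\textbf{Conclusion.} Multiplying the ratios gives $N_r\ge n\prod_{j=1}^{r-1}\frac{n(t-j)}{t(j+1)}=\binom{t}{r}(n/t)^r$, as required. I would double-check the edge cases $r=1$ (which gives $N_1=n$) and $r=2$ (which is the hypothesis itself).
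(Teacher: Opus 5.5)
Your proposal is correct and follows essentially the same route as the paper, which also proves the ratio bound $N_{r+1}/N_r\ge \frac{n(t-r)}{t(r+1)}$ by induction on $r$ from Theorem~\ref{OMM}(a), closes the step with the same simplification $r(t-r+1)-t=(r-1)(t-r)$, and then multiplies the ratios to get $\binom{t}{r}(n/t)^r$. Your explicit positivity check is a detail the paper leaves implicit: restricting to $t>r-1$ guarantees that every $N_j$ involved is positive, so the ratios in Theorem~\ref{OMM}(a) are defined. You handle it correctly.
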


Finally, we turn our attention to the antidirected complete bipartite graphs $\overrightarrow{K}_{s,t}$.
By Theorem~\ref{orientedFAKS}, for positive integers $s,t\geq 1$, we have $\exo(n,\overrightarrow{K}_{s,t})\le cn^{2-\frac{1}{t}}$ for some constant $c$. We obtain the following supersaturation result for $\overrightarrow{K}_{s,t}$:

\begin{theorem}\label{antidirected4-cycle}
Let $s, t\geq 1$ be two positive integers. For every $n$-vertex oriented graph $\overrightarrow{G}$ with sufficiently large $n$, if $|E(\overrightarrow{G})|\geq es^{\frac{1}{t}}n^{2-\frac{1}{t}}$, then there are at least $\big(\frac{e}{t}\big)^t n^t$ copies of $\overrightarrow{K}_{s,t}$ in $\overrightarrow{G}$, where $e$ denotes Euler's number.
\end{theorem}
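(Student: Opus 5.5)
We count copies of $\overrightarrow{K}_{s,t}$ through common out-neighbourhoods, following the standard Kővári–Sós–Turán double counting and Jensen's inequality. Let $m=|E(\overrightarrow{G})|\ge es^{1/t}n^{2-1/t}$. For a vertex $v$, write $d^+(v)$ for its out-degree, so that $\sum_v d^+(v)=m$. For each $t$-subset $T\subseteq V(\overrightarrow{G})$, let $c(T)$ be the number of vertices $v$ with $v\to u$ for every $u\in T$. Every $s$-subset of these common in-neighbours $v$ of $T$ gives a copy of $\overrightarrow{K}_{s,t}$ with $B=T$. Since $\overrightarrow{G}$ is oriented, the $s$ sources are automatically disjoint from $T$. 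Hence the number of copies is at least $\sum_T \binom{c(T)}{s}$.

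Double counting pairs $(v,T)$ with $T\subseteq N^+(v)$ gives
\[\sum_T c(T)=\sum_v \binom{d^+(v)}{t}.\]
By convexity of $x\mapsto\binom{x}{t}$ (extended by $0$ below $t-1$),
\[\sum_T c(T)\ge n\binom{m/n}{t}.\]
For large $n$ we have $m/n\ge es^{1/t}n^{1-1/t}\to\infty$, so $\binom{m/n}{t}\ge (1-o(1))(m/n)^t/t!$. Substituting $m$ then gives
\[\sum_T c(T)\ge (1-o(1))\frac{e^t s\, n^{t}}{t!}.\]
There are at most $\binom{n}{t}\le n^t/t!$ sets $T$, so the average of $c(T)$ is at least about $e^t s$, which is large compared with $s$.

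Next apply convexity of $x\mapsto\binom{x}{s}$ over the $\binom{n}{t}$ sets $T$:
\[\sum_T\binom{c(T)}{s}\ge \binom{n}{t}\binom{\bar c}{s},\qquad \bar c=\frac{\sum_T c(T)}{\binom nt}.\]
Equivalently, one can bound directly $\binom{c}{s}\ge (c-s+1)^s/s!$. This is where the constant $e$ and the factor $s^{1/t}$ get used. A cruder route avoids needing $\bar c$ large in absolute terms: since $\binom{c}{s}\ge c-s+1$ for $c\ge s$ (and trivially for $s=1$), we get
\[\sum_T\binom{c(T)}{s}\ge \sum_T c(T)-(s-1)\binom nt.\]
The first term is about $e^t s\,n^t/t!$ and the subtracted term is at most $(s-1)n^t/t!$, so the total is at least roughly $(e^t-1)s\,n^t/t!$.

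It remains to compare with the target $(e/t)^t n^t$. Using $t!\le e\,t^{t+1/2}e^{-t}\cdot$const, or more simply $t!\le t^t$, we have $e^t n^t/t!\ge (e/t)^t n^t$. So the main term already matches the target, and the question is whether the loss from the subtraction and from $(1-o(1))$ is absorbed. The main obstacle is this bookkeeping of constants. I would use the sharper bound $t!\le e\sqrt t\,(t/e)^t$, which gives $e^t/t!\ge (e/t)^t\cdot e^{t}/(e\sqrt t)$. This leaves ample slack: a factor of about $e^{t-1}/\sqrt t\ge 1$ for $t\ge1$, with equality issues only at $t=1$. At $t=1$ the count is simply $\sum_v\binom{d^+(v)}{s}\ge n\binom{m/n}{s}$, which with $m/n\ge es\,n^{0}$ needs to be checked separately. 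I expect that case, where the hypothesis gives only linear $m$, to need the most care; there I would use $\binom{es}{s}\ge e^{s}\ge e$ to get $\ge en$ copies.
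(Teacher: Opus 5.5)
Your proposal is correct and is essentially the paper's argument: the double count $\sum_T c(T)=\sum_v\binom{d^+(v)}{t}\ge n\binom{m/n}{t}$ followed by Jensen for $\binom{\cdot}{s}$ over the $\binom{n}{t}$ sets $T$ is exactly what the paper does. The paper then finishes with $\binom{x}{k}\ge (x/k)^k$ and $\binom{n}{t}\le (en/t)^t$, which lands exactly on $(e/t)^tn^t$ for every $t\ge 1$, so your linear bound $\binom{c}{s}\ge c-s+1$ (valid for all integers $c\ge 0$), the Stirling slack and the separate $t=1$ case are sound but unnecessary; one small slip is that for $t=1$ the copies of $\overrightarrow{K}_{s,1}$ are in-stars, counted by $\sum_u\binom{d^-(u)}{s}$ rather than $\sum_v\binom{d^+(v)}{s}$, though the same Jensen bound $n\binom{m/n}{s}$ holds since $\sum_u d^-(u)=m$ as well.
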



\section{Proof of Theorem~\ref{superori}}

The following result, which could be seen as an oriented version of Katona-Nemetz-Simonovits Theorem \cite{Katona-Nemetz-Simonovits} on classical Tur\'{a}n density, means that the oriented Tur\'{a}n density $\pi_o(\overrightarrow{F})$ is well defined. 

\begin{proposition}\label{pi}
For any oriented graph $\overrightarrow{F}$, $\pi_o(\overrightarrow{F})$ exists.
\end{proposition}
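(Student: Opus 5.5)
We will follow the classical Katona--Nemetz--Simonovits averaging argument: we show that the sequence $a_n=\exo(n,\overrightarrow{F})/\binom{n}{2}$ is non-increasing in $n$. Since $0\le a_n\le 1$ for all $n$, monotonicity immediately gives existence of $\lim_{n\to\infty}a_n$, which is the definition of $\pi_o(\overrightarrow{F})$.

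The key step is the inequality $a_{n}\le a_{n-1}$ for $n\ge 3$. Let $\overrightarrow{G}$ be an $\overrightarrow{F}$-free oriented graph on $n$ vertices with $|E(\overrightarrow{G})|=\exo(n,\overrightarrow{F})$. For each vertex $v$, the subgraph $\overrightarrow{G}-v$ is an oriented graph on $n-1$ vertices. It is still $\overrightarrow{F}$-free, because any copy of $\overrightarrow{F}$ in it would also be a copy in $\overrightarrow{G}$. Hence $|E(\overrightarrow{G}-v)|\le \exo(n-1,\overrightarrow{F})$.

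We then double count arcs. Each arc $\overrightarrow{uw}$ of $\overrightarrow{G}$ survives in exactly $n-2$ of the $n$ vertex-deleted subgraphs, namely those with $v\notin\{u,w\}$. Summing over all $v$ gives
\[(n-2)\,|E(\overrightarrow{G})|=\sum_{v}|E(\overrightarrow{G}-v)|\le n\,\exo(n-1,\overrightarrow{F}).\]
Therefore
\[\exo(n,\overrightarrow{F})\le \frac{n}{n-2}\exo(n-1,\overrightarrow{F}).\]
Dividing both sides by $\binom{n}{2}=\frac{n}{n-2}\binom{n-1}{2}$ yields $a_n\le a_{n-1}$.

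The argument is routine and has no real obstacle. The one point to be careful about is that the class of $\overrightarrow{F}$-free oriented graphs is closed under deleting vertices. This holds because "copy" means a subdigraph, so the hereditary property is immediate. The case where $\overrightarrow{F}$ contains a directed cycle needs no separate treatment: there $a_n=1$ for all $n$ and the limit is $1$, which the monotone argument also covers. Finally, if $\overrightarrow{F}$ has more than $n$ vertices, then $\exo(n,\overrightarrow{F})=\binom{n}{2}$ for those small $n$, and the inequality still holds verbatim.
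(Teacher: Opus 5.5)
Your proposal is correct and is essentially the paper's argument. The paper deletes a uniformly random vertex and uses linearity of expectation to get $\mathbb{E}\,|E(\overrightarrow{G}-v)|=\binom{n-1}{2}a_n$, which is your identity $\sum_v|E(\overrightarrow{G}-v)|=(n-2)|E(\overrightarrow{G})|$ divided by $n$; it then concludes from monotonicity of the bounded sequence $a_n$, exactly as you do.
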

\begin{proof}
Let $a_n=\frac{\exo(n,\overrightarrow{F})}{\binom{n}{2}}$. Clearly, $a_n\in [0,1]$ for each $n\in \mathbb{N}$. It suffices to show that this bounded positive sequence $\{a_n\}_{n\in \mathbb{N}}$ is non-increasing. Let $\overrightarrow{G}_n$ be an $n$-vertex $\overrightarrow{F}$-extremal oriented graph, that is, $|E(\overrightarrow{G}_n)|=\exo(n,\overrightarrow{F})$. Hence, $a_n=\frac{|E(\overrightarrow{G}_n)|}{\binom{n}{2}}$.

Let $\overrightarrow{G}_{n-1}$ be a (random) oriented graph obtained by deleting a vertex from $\overrightarrow{G}_n$ uniformly at random. For each $e\in E(\overrightarrow{G}_n)$, the probability that the deleted vertex is an endvertex of $e$ is $\frac{2}{n}$, and so 
$$\mathbb{P}(e\in E(\overrightarrow{G}_{n-1}))=1-\frac{2}{n}.$$
 By linearity of expectation, we have
$$\mathbb{E}(|E(\overrightarrow{G}_{n-1})|)=(1-\frac{2}{n})|E(\overrightarrow{G}_{n})|=\frac{n-2}{n}a_n\binom{n}{2}=\binom{n-1}{2}a_n.$$
Therefore, there exists a choice of $v$ such that $\overrightarrow{G}_{n-1}=\overrightarrow{G}_n-v$ has at least $\binom{n-1}{2}a_n$ arcs. As $\overrightarrow{G}_{n-1}$ is still $\overrightarrow{H}$-free, we have
$$a_n=\frac{\binom{n-1}{2}a_n}{\binom{n-1}{2}} \leq \frac{|E(\overrightarrow{G}_{n-1})|}{\binom{n-1}{2}}\leq \frac{\exo(n-1,\overrightarrow{F})}{\binom{n-1}{2}}=a_{n-1}.$$
This completes the proof.
\end{proof}

We are now in a position to prove Theorem~\ref{superori}. Recall the theorem.

\vskip 0.5cm

\noindent
{\bf Theorem~\ref{superori}:} {\em   Let $\varepsilon >0$. For any oriented graph $\overrightarrow{F}$ with order $h$, there exist $\delta>0$ and $n_0\in \mathbb{N}$ such that any oriented graph $\overrightarrow{G}$ with order at least $n_0$ and size at least $
\left(\pi_o(\overrightarrow{F})+\varepsilon\right)\binom{n}{2}$ contains at least $\delta \binom{n}{h}$ copies of $\overrightarrow{F}$.}

\vskip 0.5cm

\begin{proof}
By the definition of $\pi_o(\overrightarrow{F})$, we can take a sufficiently large $m$ (depending only on $\varepsilon$ and $\overrightarrow{F}$), such that $$\exo(m,\overrightarrow{F})\leq \left(\pi_o(\overrightarrow{F})+\varepsilon/4\right)\binom{m}{2}.$$

We need the following claim:

\begin{claim}
There are at least $\frac{\varepsilon}{4}\binom{n}{m}$ $m$-sets in $V(\overrightarrow{G})$ inducing an oriented subgraph with more than $\left(\pi_o(\overrightarrow{F})+\varepsilon/4\right)\binom{m}{2}$ arcs.
\end{claim}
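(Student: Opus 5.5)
The plan is the standard averaging argument over all $m$-subsets of $V(\overrightarrow{G})$. For an $m$-set $M$, write $e(M)$ for the number of arcs of $\overrightarrow{G}[M]$. Call $M$ \emph{good} if $e(M)>\left(\pi_o(\overrightarrow{F})+\varepsilon/4\right)\binom{m}{2}$, and \emph{bad} otherwise. I will double count pairs $(M,e)$, where $M$ is an $m$-set and $e$ is an arc with both endpoints in $M$. Each arc of $\overrightarrow{G}$ lies in exactly $\binom{n-2}{m-2}$ sets $M$. This gives
\[
\sum_{M} e(M)=|E(\overrightarrow{G})|\binom{n-2}{m-2}\geq \left(\pi_o(\overrightarrow{F})+\varepsilon\right)\binom{n}{2}\binom{n-2}{m-2}=\left(\pi_o(\overrightarrow{F})+\varepsilon\right)\binom{n}{m}\binom{m}{2},
\]
using the identity $\binom{n}{2}\binom{n-2}{m-2}=\binom{n}{m}\binom{m}{2}$. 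Equivalently, the average of $e(M)/\binom{m}{2}$ over all $m$-sets is at least $\pi_o(\overrightarrow{F})+\varepsilon$.

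Next I bound the same sum from above. Suppose, for contradiction, that fewer than $\frac{\varepsilon}{4}\binom{n}{m}$ sets are good. A good set contributes at most $\binom{m}{2}$, since $e(M)\le\binom{m}{2}$ trivially for an oriented graph. A bad set contributes at most $\left(\pi_o(\overrightarrow{F})+\varepsilon/4\right)\binom{m}{2}$. Hence
\[
\sum_M e(M) < \frac{\varepsilon}{4}\binom{n}{m}\binom{m}{2}+\binom{n}{m}\left(\pi_o(\overrightarrow{F})+\frac{\varepsilon}{4}\right)\binom{m}{2}=\left(\pi_o(\overrightarrow{F})+\frac{\varepsilon}{2}\right)\binom{n}{m}\binom{m}{2}.
\]
This contradicts the lower bound, since $\varepsilon/2<\varepsilon$. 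So at least $\frac{\varepsilon}{4}\binom{n}{m}$ of the $m$-sets are good, which is the claim.

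No step is a real obstacle; the only points needing care are the binomial identity and the trivial bound $e(M)\le\binom{m}{2}$ for good sets. The argument needs only $n\ge m$, which is guaranteed by taking $n_0\ge m$. Afterwards the claim feeds into the main proof as follows. By the choice of $m$, each good $m$-set exceeds $\exo(m,\overrightarrow{F})$ arcs and therefore contains a copy of $\overrightarrow{F}$. Each copy of $\overrightarrow{F}$ lies in at most $\binom{n-h}{m-h}$ of the $m$-sets. The number of copies is therefore at least $\frac{\varepsilon}{4}\binom{n}{m}/\binom{n-h}{m-h}=\frac{\varepsilon}{4}\binom{n}{h}/\binom{m}{h}$, so one can take $\delta=\frac{\varepsilon}{4}\binom{m}{h}^{-1}$.
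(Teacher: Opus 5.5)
Your proof is correct and is essentially the paper's argument: compute $\sum_M e(M)=\binom{n-2}{m-2}|E(\overrightarrow{G})|$ by double counting. Then assume fewer than $\frac{\varepsilon}{4}\binom{n}{m}$ good sets, bounding good sets by $\binom{m}{2}$ and all sets by $\left(\pi_o(\overrightarrow{F})+\varepsilon/4\right)\binom{m}{2}$, which gives the contradiction $\pi_o(\overrightarrow{F})+\varepsilon/2\geq\pi_o(\overrightarrow{F})+\varepsilon$.
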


\begin{proof}
Suppose for contradiction. The following holds:
\begin{equation*}
\begin{aligned}
\sum_{M\in\binom{V(\overrightarrow{G})}{m}}|E(\overrightarrow{G}[M])|
 &\leq \frac{\varepsilon}{4} \binom{n}{m}\binom{m}{2}+\binom{n}{m} \left(\pi_o(\overrightarrow{F})+\frac{\varepsilon}{4}\right)\binom{m}{2}\\
 &=\binom{m}{2}\binom{n}{m}\left(\pi_o(\overrightarrow{F})+\frac{\varepsilon}{2}\right)\\
 &=\binom{n}{2}\binom{n-2}{m-2}\left(\pi_o(\overrightarrow{F})+\frac{\varepsilon}{2}\right),\\
\end{aligned}
\end{equation*}
On the other hand, we have
$$\sum_{M\in\binom{V(\overrightarrow{G})}{m}}|E(\overrightarrow{G}[M])|=\binom{n-2}{m-2}|E(\overrightarrow{G})|\geq \left(\pi_o(\overrightarrow{F})+\varepsilon\right)\binom{n}{2}\binom{n-2}{m-2}.$$
Hence we can get $\pi_o(\overrightarrow{F})+\frac{\varepsilon}{2}\geq \pi_o(\overrightarrow{F})+\varepsilon,$ a contradiction.
\end{proof}

Now on the one hand, by the above claim, there are at least $\frac{\varepsilon}{4}\binom{n}{m}$ $m$-sets in $V(\overrightarrow{G})$ inducing an oriented subgraph with more than $\left(\pi_o(\overrightarrow{F})+\varepsilon/4\right)\binom{m}{2}$ arcs, moreover, each such an oriented subgraph contains a copy of $\overrightarrow{F}$ by the definition of $\pi_o(\overrightarrow{F})$. On the other hand, each copy of $\overrightarrow{F}$ is contained in at most $\binom{n-h}{m-h}$ oriented subgraphs induced by $m$-sets. Hence, the number of copies of $\overrightarrow{F}$ in $\overrightarrow{G}$ is at least 
\begin{equation*}
\begin{aligned}
\frac{\frac{\varepsilon}{4}\binom{n}{m}}{\binom{n-h}{m-h}}&=\frac{\frac{\varepsilon}{4}\frac{n!}{m!(n-m)!}}{\frac{(n-h)!}{(n-m)!(m-h)!}}\\
&=\frac{\frac{\varepsilon}{4}n!}{m!}\frac{(m-h)!}{(n-h)!}\\
&=\frac{\frac{\varepsilon}{4}(m-h)!h!}{m!}\frac{n!}{(n-h)!h!}\\
&=\frac{\varepsilon}{4\binom{m}{h}}\binom{n}{h}.
\end{aligned}
\end{equation*} 
Now we finish the proof by setting $\delta=\frac{\varepsilon}{4\binom{m}{h}}.$
\end{proof}

\section{Proof of Theorem~\ref{TT3}}

We need to prove the following result at first.
\begin{proposition}\label{smallgraph}
The following assertions hold:
\begin{description}
\item[(a)] $z(\overrightarrow{TT}_3)=4$.
\item[(b)] Let $\overrightarrow{G}$ be an oriented graph with order $n\in \{4,5,6\}$. If $|E(\overrightarrow{G})|=|E(T(n,3))|+1$, then there are at least  $n-2$ copies of $\overrightarrow{TT}_3$ in $\overrightarrow{G}$.
\end{description}
\end{proposition}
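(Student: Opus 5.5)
For (a), I will prove the two inequalities $z(\overrightarrow{TT}_3)\le 4$ and $z(\overrightarrow{TT}_3)\ge 4$ separately.

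\textbf{Upper bound.} Every tournament on $4$ vertices contains a transitive triangle. A tournament in which every triangle is cyclic would need every vertex to have out-degree $1$ inside each triple containing it. On $4$ vertices the out-degrees sum to $6$, so some vertex $v$ has out-degree at least $2$. Its two out-neighbours $a,b$ then form a $\overrightarrow{TT}_3$ with $v$, whichever way the arc $ab$ is oriented. Hence $\overrightarrow{TT}_3$ embeds (injectively, so in particular homomorphically) into every $4$-tournament. By the definition of compressibility this gives $z(\overrightarrow{TT}_3)\le 4$.

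\textbf{Lower bound.} I need a tournament of order $3$ admitting no homomorphism from $\overrightarrow{TT}_3$. The cyclic triangle $\overrightarrow{C}_3$ should work. A homomorphism from $\overrightarrow{TT}_3$ must map the three vertices to distinct vertices, since adjacent vertices cannot share an image (there are no loops). It must therefore be a bijection onto $\overrightarrow{C}_3$ preserving arcs, which is impossible because $\overrightarrow{TT}_3\not\cong\overrightarrow{C}_3$. For orders $1$ and $2$ any map fails for the same reason: some two adjacent vertices would have to share an image. So $z(\overrightarrow{TT}_3)\ge 4$.

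For (b) I will do a case analysis for $n=4,5,6$, where $|E(T(n,3))|=5,8,12$. So $\overrightarrow{G}$ has $6$, $9$, or $13$ arcs respectively, and I must find at least $2$, $3$, or $4$ copies of $\overrightarrow{TT}_3$.

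\textbf{Counting tool.} A triangle of the underlying graph is a $\overrightarrow{TT}_3$ unless it is cyclic, and in a transitive triangle exactly one vertex has both its triangle-arcs outgoing. So the number of $\overrightarrow{TT}_3$ equals the number of pairs $(v,\{a,b\})$ with $a,b\in N^+(v)$ and $a,b$ adjacent. The same holds with in-neighbourhoods in place of out-neighbourhoods.

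\textbf{Case $n=4$.} $\overrightarrow{G}$ is a $4$-tournament with score sequence $(3,1,1,1)$, $(2,2,1,1)$, $(3,2,1,0)$, or $(2,2,2,0)$. Here the count is $\sum_v\binom{d^+(v)}{2}\ge 2$ in every case.

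\textbf{Case $n=5$.} $\overrightarrow{G}$ is $K_5$ minus one edge. It contains $K_4$ subgraphs, and each yields at least $2$ copies. I take two $K_4$'s sharing a triangle and use inclusion–exclusion on the triangles they contain, or alternatively apply $\sum_v\binom{d^+(v)}{2}$ corrected for the missing edge.

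\textbf{Case $n=6$.} $\overrightarrow{G}$ is $K_6$ minus $2$ edges. Its underlying graph contains many $K_4$'s, and I will pick vertex-disjoint-enough $K_4$'s or count via out-degree sums to reach $4$.

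I expect the $n=6$ case to be the main obstacle, since the missing pair of edges can be disjoint or share a vertex, and the orientations vary. My approach there is a counting argument on the whole graph. The number of triangles in the underlying graph is at least $20-2\cdot 4=12$. Cyclic triangles are bounded because each vertex lies in at most $d^+(v)d^-(v)$ non-transitive triangles counted appropriately. Using $\sum d^+ = 13$ and convexity, this should force at least $4$ transitive ones. If the convexity bound comes out short, I will fall back to splitting on whether the two missing edges share a vertex.
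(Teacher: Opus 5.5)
Part (a), the case $n=4$, and the case $n=5$ via the two $K_4$'s $\{v_1,v_3,v_4,v_5\}$ and $\{v_2,v_3,v_4,v_5\}$ are all fine. For $n=5$, those two $K_4$'s share only the triangle $\{v_3,v_4,v_5\}$, which gives at least $2+2-1=3$ copies. Your alternative for $n=5$ would not work: $\sum_v\binom{d^+(v)}{2}\ge 4$, minus up to $3$ common in-neighbours of the non-adjacent pair, leaves only $1$.

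\textbf{The gap is $n=6$.} You leave this case as a plan, and the counting you sketch is too weak as stated. Take the missing pairs to be $v_1v_2$ and $v_3v_4$, so there are $12$ triangles.
\begin{itemize}
\item Convexity with $\sum d^+=13$ gives only $\sum_v\binom{d^+(v)}{2}\ge 8$. The correction for out-pairs that are non-adjacent is bounded only by $2\cdot 4=8$, so you get nothing.
\item Bounding cyclic triangles by $3C\le\sum_v d^+(v)d^-(v)\le 28$ gives $C\le 9$, hence only $T\ge 3$.
\end{itemize}
The bound $4$ is attained: orient $T(6,3)$ with parts $A,B,C$ as $A\to B\to C\to A$ and add one arc inside $A$. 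So any counting argument has to be exact at the extremal configuration, and ``this should force at least 4 transitive ones'' cannot be left unchecked.

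\textbf{Two ways to close it.}

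\emph{Sharp counting.} Keep the transitive triangles in the count. Consider, summed over $v$, the number of pairs $(a,b)\in N^+(v)\times N^-(v)$ with $a,b$ adjacent. Each cyclic triangle contributes $3$ to this sum and each transitive triangle contributes $1$ (through its middle vertex). The sum is at most $\sum_v\lfloor d(v)^2/4\rfloor$, which equals $28$ for both possible degree sequences $(4,4,4,4,5,5)$ and $(3,4,4,5,5,5)$. Since $T+C\ge 12$, this gives $3(12-T)+T\le 28$, i.e.\ $T\ge 4$.

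\emph{The paper's deletion argument,} which reuses the case $n=5$. Pick a vertex $x$ of total degree $4$ that lies in a $4$-tournament:
\begin{itemize}
\item $x=v_4$ in $\{v_2,v_4,v_5,v_6\}$ when the missing pairs are $v_1v_2,v_3v_4$;
\item $x=v_3$ in $\{v_3,v_4,v_5,v_6\}$ when the missing pairs are $v_1v_2,v_2v_3$.
\end{itemize}
That $4$-tournament has at least two copies of $\overrightarrow{TT}_3$, and at most one of them avoids $x$, so one contains $x$. Meanwhile $\overrightarrow{G}-x$ has $5$ vertices and $13-4=9$ arcs. By the $n=5$ case it contains $3$ further copies, all avoiding $x$, for a total of $4$.
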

\begin{proof}

\noindent{\bf Part (a)} 
Observe that any tournament with order four contains at least two copies of $\overrightarrow{TT}_3$. Indeed, let $T$ be a tournament on $\{v_i\mid i\in [4]\}$. Since $\sum_{i=1}^4d_T^+(v_i)=6$, one of the following two cases must hold: (i) there is a vertex, say $v_1$, with out-degree three; (ii) there are two vertices, say $v_2,v_3$, with out-degrees two. For the former case, $v_1$ and two of its out-neighbors form a $\overrightarrow{TT}_3$, and so there are at least three copies of $\overrightarrow{TT}_3$ in $T$; for the latter case, $v_i~(2\leq i\leq 3)$ and its out-neighbors form a $\overrightarrow{TT}_3$, and so there are at least two copies of $\overrightarrow{TT}_3$ in $T$. Therefore, there are at least two copies of $\overrightarrow{TT}_3$ in $T$ and $z(\overrightarrow{TT}_3)\leq 4$. Now consider the directed cycle $\overrightarrow{C}_3$, clearly, there is no homomorphism from $\overrightarrow{TT}_3$ to $\overrightarrow{C}_3$, which means that $z(\overrightarrow{TT}_3)\geq 4$. Hence, $z(\overrightarrow{TT}_3)=4$.

\noindent{\bf Part (b)} We first consider the case that $n=4$. If $|E(\overrightarrow{G})|=|E(T(n,3))|+1$, then $\overrightarrow{G}$ is a tournament and so has at least two copies of $\overrightarrow{TT}_3$ by the argument of (a).

Next we consider the case that $n=5$. Let $\overrightarrow{G}$ be an oriented graph on $\{v_i\mid i\in [5]\}$ with $|E(T(5,3))|+1=9$ arcs. Clearly, $\overrightarrow{G}$ contains two non-adjacent vertices, say $v_1,v_2$. Observe that the induced oriented subgraph $\overrightarrow{G}[\{v_1,v_3,v_4,v_5\}]$ is a 4-tournament, and so contains at least two copies of $\overrightarrow{TT}_3$, moreover, one of them must contain $v_1$, we denote this copy by $\overrightarrow{T}'$. Now $\overrightarrow{G}'=\overrightarrow{G}-v_1$ is also a 4-tournament, so it contains at least two copies of $\overrightarrow{TT}_3$, combining with $\overrightarrow{T}'$, we get at least three copies of $\overrightarrow{TT}_3$ in $\overrightarrow{G}$, as desired.

Finally, it remains to consider the case that $n=6$. Let $\overrightarrow{G}$ be an oriented graph on $\{v_i\mid i\in [6]\}$ with $|E(T(6,3))|+1=13$ arcs. Observe that $\overrightarrow{G}$ contains two couples of non-adjacent vertices, say $\{v_1, v_2\}$ and $\{a,b\}$. 

CASE 1. $|\{v_1,v_2\}\cap\{a,b\}|=1$. Without loss of generality, assume that $a=v_2, b=v_3$. Observe that $\overrightarrow{G}[\{v_3,v_4,v_5,v_6\}]$ is a 4-tournament, and so contains at least two copies of $\overrightarrow{TT}_3$, and one of them must contain $v_3$, we denote this copy by $\overrightarrow{T}_1$. 
Now $\overrightarrow{G}_1=\overrightarrow{G}-v_3$ has $5$ vertices and $9$ arcs. According to the argument of the case that $n=5$, $\overrightarrow{G}_1$ contains at least three copies of $\overrightarrow{TT}_3$, combining with $\overrightarrow{T}
_1$, we get at least four copies of $\overrightarrow{TT}_3$ in $\overrightarrow{G}$, as desired.

CASE 2. $|\{v_1,v_2\}\cap\{a,b\}|=0$. Without loss of generality, assume that $a=v_3, b=v_4$. Observe that $\overrightarrow{G}_3[\{v_2,v_4,v_5,v_6\}]$ is a 4-tournament, and so contains at least two copies of $\overrightarrow{TT}_3$, and one of them must contain $v_4$, we denote this copy by $\overrightarrow {T}_1$. 
Let $\overrightarrow{G}_1= \overrightarrow{G}-v_4$. Now $\overrightarrow{G}_1$ has $5$ vertices and $9$ arcs. According to the argument of the case that $n=5$, $\overrightarrow{G}_1$ contains at least three copies of $\overrightarrow{TT}_3$, combining with $\overrightarrow{T}
_1$, we get at least four copies of $\overrightarrow{TT}_3$ in $\overrightarrow{G}$, as desired.
\end{proof}


Now we are ready to prove Theorem~\ref{TT3}.

\vskip 0.5cm

\noindent
{\bf Theorem~\ref{TT3}:} {\em Let $\overrightarrow{G}$ be an oriented graph with order $n=3k+t>3$, where $k$ is a positive integer and $t\in\{0,1,2\}$. If  $|E(\overrightarrow{G})|=|E(T(n,3))|+1$, then there are at least     $f(k)$ copies of $\overrightarrow{TT}_3$ in $\overrightarrow{G}$, where $f(k)=\begin{cases}
       2k,& t\in \{0,1\}\\
       2k+1,&t=2\\
    \end{cases}$.}

\vskip 0.5cm


\begin{proof}
We prove the theorem by induction on $n$. It holds when $n\in \{4,5,6\}$ by Proposition~\ref{smallgraph}. Assume that it holds for any oriented graph with order $n-1$, where $n\geq 7$, and now we will prove that it also holds for an oriented graph $\overrightarrow{G}$ with order $n$. The proof is divided into the following three cases: 

    CASE 1. $n=3k$. Observe that $\overrightarrow{G}$ contains at least one vertex of total degree at most $2k$; otherwise, we have $$|E(\overrightarrow{G})|\geq \frac{1}{2}(2k+1)3k=3k^2+\frac{3}{2}k>3k^2+1=|E(T(3k,3))|+1,$$ a contradiction. 
    
    CASE 1.1. There is a vertex $v$ with $d(v)\leq 2k-1$ in $\overrightarrow{G}$. Now $\overrightarrow{G}_1=\overrightarrow{G}-v$ has $3k-1$ vertices and at least $$3k^2-2k+2=|E(T(3k-1,3))|+2=|E\big(T\big(3(k-1)+2,3\big)\big)|+2$$ arcs. By the induction hypothesis, $\overrightarrow{G}_1$ contains at least one copy, say $T$, of $\overrightarrow{TT}_3$. Let $e\in A(T).$ The oriented graph $\overrightarrow{G}_2=\overrightarrow{G}_1-e$ has $3k-1$ vertices and at least $$3k^2-2k+1=|E(T(3k-1,3))|+1=|E
\big(T\big(3(k-1)+2,3\big)\big)|+1$$ arcs. By the induction hypothesis, $\overrightarrow{G}_2$ contains    
    at least $2(k-1)+1=2k-1$ copies of $\overrightarrow{TT}_3$, combining with $T$, we get at least $2k$ copies of $\overrightarrow{TT}_3$ in $\overrightarrow{G}_1$ (and so in $\overrightarrow{G}$), as desired.

    CASE 1.2. $d(v)\geq 2k$ for each $v\in V(\overrightarrow{G})$. We use $x$ to denote the number of vertices in $\overrightarrow{G}$ with total degree exactly $2k$, and so
    $$|E(\overrightarrow{G})|\geq \frac{1}{2}\big(2kx+(3k-x)(2k+1)\big)=3k^2+\frac{3}{2}k-\frac{1}{2}x.$$
    If $x\leq 3k-3$, then $$|E(\overrightarrow{G})|\geq 3k^2+\frac{3}{2}>3k^2+1=|E(T(3k,3))|+1,$$
    a contradiction. Therefore, the number of vertices in $\overrightarrow{G}$ with total degree exactly $2k$ is at least $3k-2$. Choose one such vertex $u$. The oriented graph $\overrightarrow{G}_1=\overrightarrow{G}-u$ has $3k-1$ vertices and $$3k^2-2k+1=|E(T(3k-1,3))|+1=|E
\big(T\big(3(k-1)+2,3\big)\big)|+1$$ arcs. By the induction hypothesis, $\overrightarrow{G}_1$ contains at least one copy, say $T$, of $\overrightarrow{TT}_3$. Furthermore, $T$ contains at least one vertex $u'$ with total degree $2k$. The oriented graph $\overrightarrow{G}_2=\overrightarrow{G}-u'$ also has $3k-1$ vertices and $$|E(T(3k-1,3))|+1=|E
\big(T\big(3(k-1)+2,3\big)\big)|+1$$ arcs. By the induction hypothesis, $\overrightarrow{G}_2$ contains at least $2(k-1)+1=2k-1$ copies of $\overrightarrow{TT}_3$, combining with $T$, we get  at least $2k$ copies of $\overrightarrow{TT}_3$ in $\overrightarrow{G}$, as desired.
    
    CASE 2. $n=3k+1$. Observe that $\overrightarrow{G}$ contains at least one vertex of total degree at most $2k$; otherwise, 
    $$|E(\overrightarrow{G})|\geq \frac{1}{2}(2k+1)(3k+1)=3k^2+\frac{5}{2}k+\frac{1}{2}>3k^2+2k+1=|E(T(3k+1,3))|+1,$$a contradiction.
    Let $\overrightarrow{G}_1$ be the oriented graph obtained from $\overrightarrow{G}$ by removing a vertex of total degree at most $2k$. Now $\overrightarrow{G}_1$ has $3k$ vertices and at least
    $$3k^2+1=|E(T(3k,3))|+1$$
    arcs. Hence, by the induction hypothesis, $\overrightarrow{G}_1$ contains at least $2k$ copies of $\overrightarrow{TT}_3$, as desired.

    CASE 3. $n=3k+2$. Observe that $\overrightarrow{G}$ contains at least one vertex of total degree at most $2k+1$; otherwise, 
    $$|E(\overrightarrow{G})|\geq\frac{1}{2}(2k+2)(3k+2)=3k^2+5k+2>3k^2+4k+2=|E(T(3k+2,3))|+1=|E(\overrightarrow{G})|,$$
    a contradiction. 
    
    CASE 3.1. There is a vertex $v$ with $d(v)\leq 2k$ in $\overrightarrow{G}$. 
    Now $\overrightarrow{G}_1=\overrightarrow{G}-v$ has $3k+1$ vertices and at least $$|E(T(3k+2,3))|+1-2k=3k^2+2k+2=|E(T(3k+1,3))|+2$$ arcs. By the induction hypothesis, $\overrightarrow{G}_1$ contains at least one copy, say $T$, of $\overrightarrow{TT}_3$. Let $e\in A(T)$. The oriented graph $\overrightarrow{G}_2=\overrightarrow{G}_1-e$ has $3k+1$
    vertices and at least $|E(T(3k+1,3))|+1$ arcs. By the induction hypothesis, 
    $\overrightarrow{G}_2$ contains at least $2k$ copies of $\overrightarrow{TT}_3$, combining with $T$, we get at least $2k+1$ copies of $\overrightarrow{TT}_3$ in $\overrightarrow{G}_1$ (and so in $\overrightarrow{G}$), as desired.

    CASE 3.2. $d(v)\geq 2k+1$ for each $v\in V(\overrightarrow{G})$. 
    We use $x$ to denote the number of vertices in $\overrightarrow{G}$ with total degree exactly $2k+1$, and so
    $$|E(\overrightarrow{G})|\geq\frac{1}{2}\big(x(2k+1)+(3k+2-x)(2k+2)\big)=3k^2+5k-\frac{x}{2}+2.$$
    If $x\leq 2k-1$, then 
    $$|E(\overrightarrow{G})|\geq 3k^2+4k+\frac{5}{2}>3k^2+4k+2=|E(T(3k+2,3))|+1,$$
    a contradiction. Therefore, the number of vertices in $\overrightarrow{G}$ with total degree exactly $2k+1$ is at least $2k$.

    CASE 3.2.1. $d(v)\leq 2k+2$ for each $v\in V(\overrightarrow{G})$. 
    We use $V_1$ and $V_2$ to denote the set of vertices with total degrees $2k+1$ and $2k+2$ in $\overrightarrow{G}$, respectively. As $|E(\overrightarrow{G})|=3k^2+4k+2$, we can deduce that $|V_1|=2k, |V_2|=k+2$. We now prove the following claim:
    
    
    \begin{claim}\label{claim1}
    Let $u$ be a vertex with total degree $2k+1$ in $\overrightarrow{G}$. There exist two vertices, say $v, w$, of $\overrightarrow{G}$ such that $d(v)=2k+1, d(w)=2k+2$, and $\{u,v,w\}$ induces a triangle in the underlying graph of $\overrightarrow{G}$. 
    \end{claim}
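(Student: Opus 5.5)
The plan is to argue by contradiction, using only the degree structure established in CASE~3.2.1: every vertex has total degree $2k+1$ or $2k+2$, with $|V_1|=2k$ and $|V_2|=k+2$. Since $n=3k+2\geq 7$, we have $k\geq 2$. Fix $u\in V_1$ and let $N$ be the set of neighbours of $u$ in the underlying graph, so $|N|=2k+1$. Put $A=N\cap V_1$ and $B=N\cap V_2$, with $a=|A|$ and $b=|B|$. Since $V_1\cup V_2=V(\overrightarrow{G})$, we get $a+b=2k+1$.

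The claim is equivalent to saying that some vertex of $A$ is adjacent to some vertex of $B$. Indeed, such a pair $v\in A$, $w\in B$ together with $u$ spans a triangle in the underlying graph, and $d(v)=2k+1$, $d(w)=2k+2$. So suppose, for contradiction, that there is no arc between $A$ and $B$.

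First we rule out the degenerate cases.
\begin{itemize}
\item If $A=\emptyset$, then $b=2k+1$. But $b\leq |V_2|=k+2$, so $k\leq 1$, a contradiction.
\item If $B=\emptyset$, then $a=2k+1$. But $A\subseteq V_1\setminus\{u\}$, so $a\leq 2k-1$, a contradiction.
\end{itemize}
Hence both $A$ and $B$ are nonempty.

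Next we use the degree constraints. Take any $x\in A$. All neighbours of $x$ lie in $V(\overrightarrow{G})\setminus(B\cup\{x\})$, which has $3k+1-b$ vertices. Since $d(x)=2k+1$, this gives $2k+1\leq 3k+1-b$, that is, $b\leq k$. Similarly, take any $y\in B$. Its neighbours lie in $V(\overrightarrow{G})\setminus(A\cup\{y\})$, which has $3k+1-a$ vertices. Since $d(y)=2k+2$, this gives $a\leq k-1$.

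Combining the two bounds, $a+b\leq 2k-1<2k+1$, which contradicts $a+b=2k+1$. This proves the claim.

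The only delicate point is bookkeeping. One must exclude the vertex itself when counting its possible neighbours. One must also handle the empty cases separately, since the degree argument needs a witness $x\in A$ or $y\in B$. Beyond that, the argument is a direct count.
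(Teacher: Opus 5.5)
Your proof is correct and is essentially the paper's argument. Both assume there is no edge between the $V_1$-neighbours and the $V_2$-neighbours of $u$, and then count the possible neighbours of one $V_2$-neighbour of $u$ (the paper's $w'$, whose at least $k+1$ neighbours in $V_1$ must avoid $N_1(u)$; this is your bound $a\le k-1$, and it forces $B=V_2$) and of one $V_1$-neighbour (which then has degree at most $2k-1$; this is your bound $b\le k$ with $b=k+2$). Your two symmetric inequalities package the same count more cleanly, and they make explicit the use of $k\ge 2$ to guarantee $A,B\neq\emptyset$, which the paper leaves implicit.
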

    \begin{proof}
Since $d(u)=2k+1$ and $|V_2|=k+2$, there are at least $k-1$ neighbors of $u$ with total degree $2k+1$ in $\overrightarrow{G}$. Denote this set of vertices by $N_1(u)$, so $|N_1(u)|\geq k-1$. Suppose that there is no vertex of total degree exactly $2k+2$ which is adjacent to $u$ and its neighbors with total degree $2k+1$. Since  $d(u)=2k+1$ and $|V_1|=2k$, there exists a vertex $w'\in V_2$  which is adjacent to $u$. Moreover, there are at least $k+1$ vertices of total degree $2k+1$ which are adjacent to $w'$. Denote this set of vertices by $N_1(w')$, so $|N_1(w')|\geq k+1$. Hence, $u\in N_1(w')$ and $N_1(u)\cap N_1(w')=\emptyset$ (otherwise, setting $v\in N_1(u)\cap N_1(w')$ and $w=w'$ yields the desired vertices). Since $d(u)=2k+1$, $u$ is adjacent to each vertex of $V(\overrightarrow{G})\setminus N_1(w')$ and $|V(\overrightarrow{G})\setminus N_1(w')|=2k+1$. In particular, $u$ is adjacent to all vertices of $V_2$.   
Observe that there are no arcs between $V_1\setminus N_1(w')$ and $V_2$ (otherwise, let $w''\in V_2$ be adjacent to $v'\in V_1\setminus N_1(w')$, then setting $v'=v$ and $w''=w$ yields the desired vertices). However, now the total degree of each vertex in $V_1\setminus N_1(w')$ is at most $2k-1$, a contradiction. Hence, there is a vertex of total degree exactly $2k+2$ which is adjacent to $u$ and its neighbors with total degree $2k+1$, as desired.
    \end{proof}

Let $u$ be  a vertex with total degree $2k+1$ in $\overrightarrow{G}$. By Claim~\ref{claim1}, there exists two vertices, say $v, w$, of $\overrightarrow{G}$ such that $d(v)=2k+1, d(w)=2k+2$, and $\{u,v,w\}$ induces a triangle in the underlying graph of $\overrightarrow{G}$.


Let $\overrightarrow{G}_1=\overrightarrow{G}-\{u,v\}$. Clearly, $\overrightarrow{G}_1$ contains $3k$ vertices and $$|E(T(3k+2,3))|+1-2k-1-2k=3k^2+1=|E(T(3k,3))|+1$$ arcs. By the induction hypothesis, $\overrightarrow{G}_1$ contains at least $2k$ copies of $\overrightarrow{TT}_3$ and thus $\overrightarrow{G}$ contains at least $2k$ copies of $\overrightarrow{TT}_3$. Therefore, it suffices to find one additional copy of $\overrightarrow{TT}_3$ in $\overrightarrow{G}$ containing $u$ or $v$. Now we partition $V(\overrightarrow{G}_1)$ into four parts as follows: let $Q_1$ denote the set of vertices adjacent to $u$ but not $v$ in $\overrightarrow{G}$, $Q_2$ denote the set of vertices adjacent to $v$ but not $u$ in $\overrightarrow{G}$, $Q_3$ denote the set of vertices adjacent to both $u$ and $v$  in $\overrightarrow{G}$, and $Q_4=V(\overrightarrow{G}_1)\setminus(Q_1\cup Q_2\cup Q_3)$. Clearly, $w\in Q_3$ and $|Q_3|\geq k$ by Claim~\ref{claim1},. 
    
Recall that every 4-tournament contains at least two copies of $\overrightarrow{TT}_3$. If there exists an arc $\overrightarrow{w_1w_2}$ in $\overrightarrow{G}[Q_3]$, then $\overrightarrow{G}[\{u,v,w_1,w_2\}]$ is a 4-tournament, and so contains at least two copies of $\overrightarrow{TT}_3$, and one of them must contain $u$ or $v$, as desired. In the following, it remains to show that there is an arc in $\overrightarrow{G}[Q_3]$.
       
       CASE 3.2.1.1. $|Q_4|\geq 1$. Since $n=3k+2, d(u)=d(v)=2k+1$, and $u$ and $v$ are adjacent in $\overrightarrow{G}$,  we have $|Q_3|\geq k+1$, which implies that $|V(\overrightarrow{G})\backslash Q_3|\leq 2k+1$, and thus $w$ is adjacent to at least one vertex in $Q_3$ as $d(w)=2k+2$, as desired. 

       CASE 3.2.1.2. $Q_4=\emptyset$. We first consider the case that $|Q_3|\geq k+1$. Now $|V(\overrightarrow{G})\setminus Q_3|\leq 2k+1$. Since $d(w)=2k+2$,  there must be a vertex $u_3\in Q_3$ which is adjacent to $w$. Then $\overrightarrow{G}[\{u,v,w,u_3\}]$ is a 4-tournament, which contains the desired copy of $\overrightarrow{TT}_3$. We next consider the case that $|Q_3|=k$. In order to ensure that $Q_3$ contains no arcs, $w$ must be adjacent to every vertex in $Q_1$ and $Q_2$. Consequently, there are no arcs within $\overrightarrow{G}[Q_1]$ or $\overrightarrow{G}[Q_2]$. Indeed, without loss of generality, assume that there exists an arc $\overrightarrow{u_1u_2}$ in $\overrightarrow{G}[Q_1]$. Now $\overrightarrow{G}[\{u_1,u_2,u,w\}]$ is a 4-tournament, which contains at least two copies of $\overrightarrow{TT}_3$ and one of them contains $u$. 
       Therefore, there is only one remaining case: $|Q_3|=k,~|Q_1|=d(u)-1-|Q_3|=d(v)-1-|Q_3|=k=|Q_2|$ and no arcs exist within $Q_1,Q_2$ or $Q_3$. However, now $$|E(\overrightarrow{G})|\leq 3k^2+2k+1+2k=3k^2+4k+1< |E(T(3k+2,3))|+1,$$ a contradiction. 
       
       CASE 3.2.2. There exists a vertex $u'\in  V(\overrightarrow{G})$ with $d(u')>2k+2$. We
        use $x$ to denote the number of vertices in $\overrightarrow{G}$ with total degree exactly $2k+1$. So now
    $$|E(\overrightarrow{G})|\geq\frac{1}{2}\big(x(2k+1)+(3k+1-x)(2k+2)+2k+3\big)=3k^2+5k-\frac{x}{2}+\frac{5}{2}.$$
       If $x\leq 2k$, then
       $$|E(\overrightarrow{G})|\geq 3k^2+4k+\frac{5}{2}>3k^2+4k+2=|E(T(3k+2,3))|+1,$$ a contradiction. Therefore, the number of vertices in $\overrightarrow{G}$ with total degree exactly $2k+1$ is at least $2k+1$. We use $V_1$ to denote the set of vertices with total degree $2k+1$, so $|V_1|\geq 2k+1$. Now we prove the following claim.
       \begin{claim}\label{claim2}
           There exist two adjacent vertices, say $u$, $v$, in $V_1$, such that $\{u,v,u'\}$ induces a triangle in the underlying graph of $\overrightarrow{G}$.
       \end{claim}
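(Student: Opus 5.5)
Setting. We are in Case 3.2.2: $n=3k+2$, $|E(\overrightarrow{G})|=3k^2+4k+2$, every vertex has total degree at least $2k+1$, and $u'$ has $d(u')\ge 2k+3$. The set $V_1$ of vertices of total degree exactly $2k+1$ satisfies $|V_1|\ge 2k+1$. We must show that $N(u')\cap V_1$ spans an edge of the underlying graph; that edge together with $u'$ gives the required triangle $\{u,v,u'\}$.

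Step 1 (many neighbours of $u'$ in $V_1$). Note $u'\notin V_1$. The vertices outside $V_1\cup\{u'\}$ number at most $3k+2-|V_1|-1\le k$. Hence
\[
|N(u')\cap V_1|\ \ge\ d(u')-k\ \ge\ k+3 .
\]
Call this set $S$.

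Step 2 (suppose $S$ is independent). Every $s\in S$ then has all of its $2k+1$ neighbours in $V(\overrightarrow{G})\setminus S$. That set has size $3k+2-|S|\le 2k-1<2k+1$. So $s$ cannot be adjacent to $2k+1$ vertices, a contradiction. In fact any $s\in S$ already has at least $(2k+1)-(2k-1)=2$ neighbours inside $S$. So there are adjacent $u,v\in S\subseteq V_1$, both adjacent to $u'$, which proves Claim~\ref{claim2}.

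Main obstacle. The only delicate point is the counting in Step 1. It needs the bound $|V_1|\ge 2k+1$, just derived in Case 3.2.2, together with $u'\notin V_1$. With those, the pigeonhole step in Step 2 is immediate. I do not expect an extremal-structure analysis like the one used for Claim~\ref{claim1}.
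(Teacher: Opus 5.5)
Your proof is correct and takes essentially the same approach as the paper. Both arguments show that $u'$ has at least $k+3$ neighbours in $V_1$, and then note that a vertex of this set with no neighbour inside it would have total degree at most $3k+2-(k+3)=2k-1<2k+1$; the paper runs this count for one chosen $u_1\in N_1(u')$, whereas you phrase it as a contradiction to independence of $S$.
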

\begin{proof}
Since $n=3k+2, |V_1|\geq 2k+1$ and $d(u')>2k+2$, there are 
    at least $k+3$ neighbors of $u'$ with total degree $2k+1$ in $\overrightarrow{G}$. Denote this set of vertices by $N_1(u')$. Let $u_1\in N_1(u')$. Since $d(u_1)=2k+1$, there are at least $k$ neighbors of $u_1$ with total degree $2k+1$ in $\overrightarrow{G}$. Denote this set of vertices by $N_1(u_1)$. If $N_1(u_1)\cap N_1(u')\neq \emptyset$, then we get the desired vertices by setting $u=u_1$ and $v\in N_1(u_1)\cap N_1(u')$. Otherwise, we have $$d(u_1)\leq (3k+2)-|N_1(u')|\leq 3k+2-(k+3)=2k-1,$$ a contradiction. This completes the proof of Claim~\ref{claim2}.
\end{proof}
Let $\overrightarrow{G}_1=\overrightarrow{G}-\{u,v\}$. Clearly, $\overrightarrow{G}_1$ contains $3k$ vertices and $$|E(T(3k+2,3))|+1-2k-1-2k=3k^2+1=|E(T(3k,3))|+1$$ arcs. By the induction hypothesis, $\overrightarrow{G}_1$ contains at least $2k$ copies of $\overrightarrow{TT}_3$ and thus $\overrightarrow{G}$ contains at least $2k$ copies of $\overrightarrow{TT}_3$. Therefore, it suffices to find one additional copy of $\overrightarrow{TT}_3$ in $\overrightarrow{G}$ containing $u$ or $v$.
By Claim~\ref{claim2}, there exist two adjacent vertices, say $u$, $v$, in $V_1$, such that $\{u,v,u'\}$ induces a triangle in the underlying graph of $\overrightarrow{G}$. We define the sets $Q_1,Q_2,Q_3,Q_4$ similarly as those in Case 3.2.1. Now $u'\in Q_3$. Since $d(u)=d(v)=2k+1$, they have at least $k$ common neighbors.  Consequently, $|V(G)\backslash Q_3|\leq2k+2$, which implies that $u'$ is adjacent to at least one vertex $u''$ in $Q_3$, then $\overrightarrow{G}[\{u,v,u',u''\}]$ is a 4-tournament, and so contains at least two copies of $\overrightarrow{TT}_3$, and one of them must contain $u$ or $v$, as desired. This completes the proof.
\end{proof}

\section{Proofs of Theorems~\ref{OMM} and~\ref{TTdensity}}


The following result can be seen as an oriented version of Moon-Moser inequality~\cite{Moon-Moser}.

\vskip 0.5cm

\noindent
{\bf Theorem~\ref{OMM}:} {\em   For any oriented graph $\overrightarrow{G}$ with order $n$, denote by $N_r$ and $M_r$ the numbers of $r$-tournaments and copies of $\overrightarrow{TT}_r$ contained in $\overrightarrow{G}$, respectively. The following inequalities hold:
\begin{description}
\item[(a)]$\frac{N_{r+1}}{N_{r}}\geq \frac{1}{r^2-1}\left(r^2\frac{N_{r}}{N_{r-1}}-n\right)$ for $r\geq 2$;
\item[(b)]$\frac{M_{r+1}}{M_{r}}\geq \frac{1}{r^2-1}\left(r^2\frac{M_{r}}{M_{r-1}}-n\right)$ for $r\geq 3$.
\end{description}
\vskip 0.5cm

\begin{proof}
\noindent{\bf Part (a)}
Let $\overrightarrow{T}$ be an $r$-tournament in $\overrightarrow{G}$ and $\overrightarrow{R}$ be an $r$-vertex induced subdigraph of $\overrightarrow{G}$ which is not a tournament, such that $|\overrightarrow{T}\cap \overrightarrow{R}|=r-1$.  
We now do double counting on the number of pairs $(\overrightarrow{T},\overrightarrow{R})$, denoted by $P$, in $\overrightarrow{G}$.

On the one hand, let $\{\overrightarrow{H}_i\mid i\in [N_{r-1}]\}$ be the set of all $(r-1)$-tournaments in $\overrightarrow{G}$. For each $i\in [N_{r-1}]$, let $a_i$ be the number of $r$-tournaments in $\overrightarrow{G}$ containing $\overrightarrow{H}_i$ i.e. $a_i$ is the number of vertices $v$ in $\overrightarrow{G}$ such that $V(\overrightarrow{H}_i)\cup \{v\}$ induces an $r$-tournament in $\overrightarrow{G}$. It can be checked that
$$\sum_{i=1}^{N_{r-1}}a_i=rN_r.$$
Also observe that there exist $n-(r-1)-a_i=n-r+1-a_i$ vertices $v' \in V(\overrightarrow{G})$ such that $V(\overrightarrow{H}_i)\cup \{v'\}$ is an $r$-vertex subset which induces an $\overrightarrow{R}$. Therefore,
\begin{equation*}
\begin{aligned}
P&=\sum_{i=1}^{N_{r-1}}a_i(n-r+1-a_i)\\
 &=(n-r+1)\sum_{i=1}^{N_{r-1}}a_i-\sum_{i=1}^{N_{r-1}}{a_i}^2\\
 &=(n-r+1)rN_r-\sum_{i=1}^{N_{r-1}}{a_i}^2\\
 &\leq (n-r+1)rN_r-\frac{(\sum_{i=1}^{N_{r-1}}a_i\cdot 1)^2}{N_{r-1}}\\
 &=(n-r+1)rN_r-\frac{r^2{N_r}^2}{N_{r-1}},
\end{aligned}
\end{equation*}
where the inequality holds by Jensen's inequality.

On the other hand, let $\{\overrightarrow{Q}_j\mid j\in [N_{r}]\}$ be the set of all $r$-tournaments in $\overrightarrow{G}$. For each $j\in [N_{r}]$, let $b_j$ be the number of $(r+1)$-tournaments in $\overrightarrow{G}$ containing $\overrightarrow{Q}_j$ i.e. $b_j$ is the number of vertices $u$ in $\overrightarrow{G}$ such that $V(\overrightarrow{Q}_j)\cup \{u\}$ induces an $(r+1)$-tournament in $\overrightarrow{G}$. It can be checked that
$$\sum_{j=1}^{N_{r}}b_j=(r+1)N_{r+1}.$$
Clearly, there exist $n-r-b_j$ vertices $u' \in V(\overrightarrow{G})$ such that the $(r+1)$-vertex subset $V(\overrightarrow{Q}_j)\cup \{u'\}$ does not induce an $(r+1)$-tournament in $\overrightarrow{G}$. Hence, there exists at least one vertex $y\in V(\overrightarrow{Q}_j)$ such that the union of $u',~y$ and any $r-2$ vertices of $\overrightarrow{Q}_j\setminus \{y\}$ induces an $\overrightarrow{R}$ in $\overrightarrow{G}$. Therefore,
\begin{equation*}
\begin{aligned}
P&\geq \sum_{j=1}^{N_r}(n-r-b_j)(r-1)\\
&=(r-1)\big(\sum_{j=1}^{N_r}(n-r)-\sum_{j=1}^{N_r}b_j\big)\\
&=(r-1)\big(N_r(n-r)-(r+1)N_{r+1}\big).
\end{aligned}
\end{equation*}
Hence,
$$(r-1)\big(N_r(n-r)-(r+1)N_{r+1}\big)\leq P\leq (n-r+1)rN_r-\frac{r^2{N_r}^2}{N_{r-1}},$$ and so $$(r+1)(r-1)N_{r+1}\geq N_r(n-r)(r-1)-(n-r+1)rN_r+\frac{r^2{N_r}^2}{N_{r-1}}.$$
Now we have
\begin{equation*}
\begin{aligned}
N_{r+1}&\geq N_r\frac{n-r}{r+1}-\frac{(n-r+1)r}{(r+1)(r-1)}N_r+\frac{r^2N_r}{(r+1)(r-1)}\frac{N_r}{N_{r-1}}\\
&= N_r\left(\frac{(n-r)(r-1)-(n-r+1)r}{(r+1)(r-1)} \right)+\frac{r^2}{r^2-1}\frac{N_r}{N_{r-1}}\\
&=\frac{r^2N_r}{r^2-1}\left(\frac{N_r}{N_{r-1}}-\frac{n}{r^2}\right),
\end{aligned}
\end{equation*}
that is, $$\frac{N_{r+1}}{N_{r}}\geq \frac{1}{r^2-1}\left(r^2\frac{N_{r}}{N_{r-1}}-n\right),$$
as desired.

\noindent{\bf Part (b)} When $r\geq 3$, we apply the same double counting method as in \noindent{\bf Part (a)}, by replacing ``$r$-tournament'' with ``$\overrightarrow{TT}_r$'' and ``$N_r$'' with ``$M_r$''. And then the desired inequality follows directly. 
\end{proof}

\textnormal{Note that the inequality in Theorem~\ref{OMM}(b) does not hold when $r=2$: let $\{\overrightarrow{Q}_j\mid j\in [M_{2}]\}$ be the set of all transitive tournaments with order $2$ (now each such tournament is an arc) in $\overrightarrow{G}$. For any $\overrightarrow{Q}_j$, there may exist some vertices $u'\in V(\overrightarrow{G})$ such that the $3$-vertex subset $V(\overrightarrow{Q}_j)\cup\{u'\}$ induces a directed cycle rather than a $\overrightarrow{TT}_3$ in $\overrightarrow{G}$. However, for any vertex $y\in V(\overrightarrow{Q}_j)$, the union of $u'$ and $y$ induces a $\overrightarrow{TT}_2$ rather than the required non-transitive subgraph $\overrightarrow{R}$ in $\overrightarrow{G}$. Therefore, the lower bound of $P$ does not hold now.}

\textnormal{We are now in a position to prove Theorem~\ref{TTdensity}. Recall the theorem.}
\vskip 0.5cm

\noindent
{\bf Theorem~\ref{TTdensity}:} {\em   Let $\overrightarrow{G}$ be an oriented graph with order $n$.
If $$|E(\overrightarrow{G})|\geq(1-\frac{1}{t})\frac{n^2}{2}$$ for some $t\in \mathbb{R}^+$, then $$N_r(\overrightarrow{G})\geq \binom{t}{r}(\frac{n}{t})^r,$$ where $$\binom{t}{r}= \left\{\begin{matrix}
\frac{t(t-1)\cdots (t-r+1)}{r!}, & t>r-1, \\
0, & t\leq r-1.
\end{matrix}\right.$$}

\vskip 0.5cm

\begin{proof}
We prove the result by induction on $r$. 
Since $$N_1(\overrightarrow{G})=n=\binom{t}{1}\big(\frac{n}{t}\big)$$ and $$N_2(\overrightarrow{G})=|E(\overrightarrow{G})|\geq(1-\frac{1}{t})\frac{n^2}{2}=\binom{t}{2}(\frac{n}{t})^2,$$
the result holds when $r\in \{1,2\}$. 



Now we assume that the result holds when $r\leq k-1$, and consider the case that $r=k$. We need the following claim:

\begin{claim}
For each $r\geq 1$, we have
$$d_r=\frac{N_{r+1}(\overrightarrow{G})}{N_r(\overrightarrow{G})}\geq \frac{n(t-r)}{t(r+1)}.$$
\end{claim}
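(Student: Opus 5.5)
Proceed by induction on $r$, using Theorem~\ref{OMM}(a) as the engine, exactly as in the classical Moon--Moser argument. Part (a) is stated for $r\ge 2$, so $r=1$ is the base case. We have $d_1=N_2/N_1=|E(\overrightarrow{G})|/n$. The hypothesis $|E(\overrightarrow{G})|\ge (1-\frac1t)\frac{n^2}{2}$ then gives
\[
d_1\ge \frac{n(t-1)}{2t},
\]
which is the claimed bound at $r=1$.

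For the inductive step, assume $d_{r-1}\ge \frac{n(t-r+1)}{tr}$ for some $r\ge 2$. Theorem~\ref{OMM}(a) reads $d_r\ge \frac{1}{r^2-1}(r^2 d_{r-1}-n)$. Substituting the hypothesis gives
\[
d_r\ge \frac{1}{r^2-1}\left(\frac{rn(t-r+1)}{t}-n\right)=\frac{n}{t(r^2-1)}\bigl(rt-r^2+r-t\bigr).
\]
Factor $rt-r^2+r-t=(r-1)(t-r)$ and cancel one factor $r-1$ against $r^2-1=(r-1)(r+1)$. This leaves exactly $\frac{n(t-r)}{t(r+1)}$, which closes the induction.

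The main obstacle is that the ratios must be well defined. Theorem~\ref{OMM}(a) divides by $N_{r-1}$ and $N_r$, so if some $N_r=0$ the quotients are meaningless. I would handle this by noting that if $t\le r$ the claimed bound is $\le 0$ and holds trivially, provided the ratio is read as $0$ or the claim is taken as vacuous. So the claim only matters while $t>r$. In that regime, positivity of $N_1,\dots,N_r$ follows inductively: the lower bound $d_{j}\ge \frac{n(t-j)}{t(j+1)}>0$ for $j<r$ forces $N_{j+1}>0$, starting from $N_1=n>0$. I would therefore state the induction as ``for every $r$ with $r<t$, $N_{r}>0$ and the bound on $d_r$ holds.'' For $r\ge t$ nothing is needed, because in Theorem~\ref{TTdensity} the target $\binom{t}{r}(n/t)^r$ is $0$ once $t\le r-1$, and it is covered by the product formula in the boundary cases.

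Finally, to finish Theorem~\ref{TTdensity}, multiply the ratios to get
\[
N_r=N_1\prod_{j=1}^{r-1}d_j\ge n\prod_{j=1}^{r-1}\frac{n(t-j)}{t(j+1)}=\binom{t}{r}\left(\frac{n}{t}\right)^r.
\]
This is valid when $t>r-1$, since then every factor $t-j$ with $j\le r-1$ is positive. When $t\le r-1$ the bound is trivial.
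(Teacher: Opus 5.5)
Your proof is correct and follows the paper's argument exactly: the base case $r=1$ comes from the edge hypothesis, and the induction step applies Theorem~\ref{OMM}(a) and uses the factorization $rt-r^2+r-t=(r-1)(t-r)$. You also check that $N_1,\dots,N_r>0$ whenever $r<t$, so the ratios are defined; the paper skips this point silently, and your treatment of it is correct.
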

\begin{proof} We prove the claim by induction on $r$. The case that $r=1$ holds, as $$d_1=\frac{N_2(\overrightarrow{G})}{N_1(\overrightarrow{G})}\geq\frac{(1-\frac{1}{t})\frac{n^2}{2}}{n}=\frac{n(t-1)}{2t}.$$ Assume that the result holds when $2\leq r\leq k-1$, and consider the case that $r=k$. By Theorem~\ref{OMM} and the induction hypothesis, we have



\begin{equation*}
\begin{aligned}
d_r=d_k=\frac{N_{k+1}(\overrightarrow{G})}{N_{k}(\overrightarrow{G})}&\geq \frac{1}{k^2-1}\left(\frac{k^2N_{k}}{N_{k-1}}-n\right)\\
&\geq \frac{1}{k^2-1}\left(k^2\cdot\frac{n(t-k+1)}{tk}-n\right)\\
&=\frac{n}{t}\cdot \frac{kt-k^2+k-t}{k^2-1}\\
&=\frac{n(t-k)}{t(k+1)}.
\end{aligned}
\end{equation*}
Therefore, the claim holds.
\end{proof}

By the above claim and the induction hypothesis, we have
\begin{equation*}
\begin{aligned}
N_{k+1}(\overrightarrow{G})&\geq N_k(\overrightarrow{G})\cdot {\frac{n(t-k)}{t(k+1)}}\\
&\geq \binom{t}{k}(\frac{n}{t})^k\cdot {\frac{n(t-k)}{t(k+1)}}\\
&=\binom{t}{k+1}(\frac{n}{t})^{k+1}.
\end{aligned}
\end{equation*}
Hence, the result holds.
\end{proof}

\section{Proof of Theorem~\ref{antidirected4-cycle}}

\textnormal{For an oriented graph $\overrightarrow{G}$ with order $n$, we use $d^+(\overrightarrow{G})=\frac{|E(\overrightarrow{G})|}{n}$ to denote the {\em average out-degree} of $\overrightarrow{G}$.} 

\vskip 0.5cm

\noindent
{\bf Theorem~\ref{antidirected4-cycle}:} {\em   Let $s, t\geq 1$ be two positive integers. For every $n$-vertex oriented graph $\overrightarrow{G}$ with sufficiently large $n$, if $|E(\overrightarrow{G})|\geq es^{\frac{1}{t}}n^{2-\frac{1}{t}}$, then there are at least $\big(\frac{e}{t}\big)^t n^t$ copies of $\overrightarrow{K}_{s,t}$ in $\overrightarrow{G}$, where $e$ denotes Euler's number.}

\vskip 0.5cm

\begin{proof}Let $d^+=d^+(\overrightarrow{G})$ and $d^+(v)$ be the out-degree of a vertex $v\in V(\overrightarrow{G})$.
We use $\alpha$ to denote the number of $\overrightarrow{K}_{1,t}$ in $\overrightarrow{G}$. 
Observe that
$$\alpha\geq\sum_{v\in V(\overrightarrow{G})}\binom{d^+(v)}{t}\geq n \binom{\frac{1}{n}\sum_{v\in V(\overrightarrow{G})}d^+(v)}{t}=n \binom{\frac{1}{n}{|E(\overrightarrow{G})|}}{t}=n\binom{d^+}{t},$$
where the second inequality follows from Jensen's inequality.

Let $d^-(u_1,u_2,\ldots,u_t)$ denote the common in-degree of the vertices $u_1,u_2,\ldots,u_t\in V(\overrightarrow{G})$. We use $\beta$ to denote the average common in-degree of a $t$-set of vertices in $\overrightarrow{G}$. Observe that
$$\beta=\frac{1}{\binom{n}{t}}\cdot \sum_{u_1,u_2,\ldots,u_t\in \binom{V(\overrightarrow{G})}{t}}d^-(u_1,u_2,\ldots,u_t)=\frac{\alpha}{\binom{n}{t}}\geq \frac{n\binom{d^+}{t}}{\binom{n}{t}}.$$
So the number of $\overrightarrow{K}_{s,t}$ in $\overrightarrow{G}$ is at least
\begin{equation*}
\begin{aligned}
\sum_{u_1,u_2,\ldots,u_t\in \binom{V(\overrightarrow{G})}{t}}\binom{d^-(u_1,u_2,\ldots,u_t)}{s}&\geq\binom{n}{t}\binom{\beta}{s}\geq\binom{n}{t} \binom{\frac{n\binom{d^+}{t}}{\binom{n}{t}}}{s},
\end{aligned}
\end{equation*}
where the first inequality follows from Jensen's inequality.
Observe that
$$\frac{n\binom{d^+}{t}}{\binom{n}{t}}\geq\frac{n\binom{d^+}{t}}{(\frac{en}{t})^t}\geq \frac{n(\frac{d^+}{t})^t}{(\frac{en}{t})^t}\geq \frac{n(\frac{e}{t}s^{\frac{1}{t}}n^{1-\frac{1}{t}})^t}{(\frac{e}{t})^tn^t}\geq  s,$$
where the first inequality follows from Stirling's inequality, which states that
$$\binom{m}{k}\leq (\frac{em}{k})^k$$
for any $k\in[m]$, and the second inequality follows from  $\binom{m}{k}\geq (\frac{m}{k})^k$ for any $k\in[m]$.\\
Therefore,
\begin{equation*}
\begin{aligned}
\binom{n}{t} \binom{\frac{n\binom{d^+}{t}}{\binom{n}{t}}}{s}&\geq \binom{n}{t} \bigg(\frac{n\binom{d^+}{t}}{s\binom{n}{t}}\bigg)^s\geq \binom{n}{t} \bigg(\frac{n(\frac{d^+}{t})^t}{s\binom{n}{t}}\bigg)^s=\frac{(\frac{d^+}{t})^{st}n^s}{s^s\binom{n}{t}^{s-1}}\\
&\geq \frac{(\frac{d^+}{t})^{st}n^s}{s^s(\frac{en}{t})^{st-t}}=
\frac{(d^+)^{st}n^s}{s^st^t {(e n)}^{st-t}}\geq \frac{(es^{\frac{1}{t}}n^{1-\frac{1}{t}})^{st}n^s}{ s^st^t {(e n)}^{st-t}}\\
&=\big(\frac{e}{t}\big)^t n^t,
\end{aligned}
\end{equation*}
where the first and second inequalities follow from $\binom{m}{k}\geq (\frac{m}{k})^k$ for any $ k\in [m]$ and the third inequality follows from Stirling's inequality.\\
Hence,
$$\sum_{u_1,u_2,\ldots,u_t\in \binom{V(\overrightarrow{G})}{t}}\binom{d^-(u_1,u_2,\ldots,u_t)}{s}\geq \big(\frac{e}{t}\big)^t n^t.$$
We finish the proof.
\end{proof}




\vskip 1cm

\noindent {\bf Acknowledgement} \textnormal{This work was supported by National Natural Science Foundation of China under Grant No. 12371352 and Yongjiang Talent Introduction Programme of Ningbo under Grant No. 2021B-011-G.}

\vskip 3mm
\noindent {\bf Data availability}\textnormal{ No data was used for the research described in the article.}


\begin{thebibliography}{99}
\normalfont
\bibitem{ahlrt} L. Addario-Berry, F. Havet, C. Linhares Sales, B. Reed and S. Thomass\'e,
Oriented trees in digraphs, Discrete Math., 313, 2013, 967--974.

\bibitem{Alon-Krivelevich-Sudakov}
N. Alon, M. Krivelevich and B. Sudakov, Tur\'{a}n numbers of bipartite graphs and related Ramsey-type questions, Combin. Prob. Comput., 12, 2003, 477--494.

\bibitem{bh} W.G. Brown and F. Harary, Extremal digraphs, Combinatorial theory and its applications, Colloq. Math. Soc. J. Bolyai, 4, 1969, 135--198.

\bibitem{burr} S.A. Burr, Subtrees of directed graphs and hypergraphs, In: Proc. 11th South eastern Conf. Combinatorics, Graph Theory and Computing, Florida Atlantic Univ., Boca Raton, Fla. I Vol. 28, 1980, 227--239.

\bibitem{Erdos1962}
P. Erd\H{o}s, On a theorem of Rademacher-Tur\'{a}n, Illinois J. Math., 6, 1962, 122--127.

\bibitem{Erdos-Simonovits}
P. Erd\H{o}s and M. Simonovits, A limit theorem in graph theory, Studia Sci. Math. Hungar., 1, 1966, 51--57.

\bibitem{Erdos-Simonovits1984}
P. Erd\H{o}s and M. Simonovits, Cube-supersaturated graphs and related problems, Progress in graph theory (Waterloo, Ont., 1982), pages 203--218, 1984.

\bibitem{Erdos-Stone}
P. Erd\H{o}s and A. H. Stone, On the structure of linear graphs, Bull. Amer. Math. Soc., 52, 1946, 1087--1091.

\bibitem{Furedi}
Z. F\"{u}redi, On a Tur\'{a}n type problem of Erd\H{o}s, Combinatorica, 11(1), 1991, 75--79.

\bibitem{Gerbner-Hu-Sun} D. Gerbner, X. Hu and Y. Sun, On oriented Turán problems, arXiv:2602.04324.

\bibitem{gra} R.L. Graham, On subtrees of directed graphs with no path of length exceeding one, Canad. Math. Bull., 13, 1970, 329--332.

\bibitem{gs} A. Grzesik and M. Skrzypczyk, Antidirected paths in oriented graphs, arXiv:2506.11866.

\bibitem{Katona-Nemetz-Simonovits}
G. Katona, T. Nemetz and M. Simonovits, On a problem of Tur\'{a}n in the theory of graphs, Mat. Lapok, 15, 1964, 228--238.

\bibitem{ks} T. Klimo\u{s}ov\'{a} and M. Stein, Antipaths in oriented graphs, Discrete Math., 346, 2023, Article 113515.

\bibitem{kst} P. K{\H{o}}v{\'a}ri, V.T. S\'os and P. Tur\'an, On a problem of Zarankiewicz, Colloq. Math., 33, 1954, 50--57. 


\bibitem{Moon-Moser} J.W. Moon and L. Moser, On cliques in graphs, Israel J. Math., 3, 1965, 23--28. 


\bibitem{T} A. Taylor, The regularity method for graphs and digraphs, arXiv:1406.6531.

\bibitem{vala} P. Valadkhan, Extremal oriented graphs and Erd\H os-Hajnal conjecture, Master’s thesis, Simon Fraser University, 2009.

\end{thebibliography}
\end{document}